\documentclass[11pt]{article}
\usepackage[T1]{fontenc}
\usepackage[utf8]{inputenc}
\usepackage{lmodern}
\usepackage{amsmath,amssymb,amsthm}
\usepackage[margin=1in]{geometry}
\usepackage{microtype}
\usepackage{graphicx}
\usepackage{url}
\usepackage{longtable}
\usepackage{array}
\newcommand{\E}{\mathbb E}
\newcommand{\R}{\mathbb R}
\newcommand{\tr}{\operatorname{tr}}
\newcommand{\qmin}{q_{\min}}
\newcommand{\ind}{\mathbf 1}
\newtheorem{lemma}{Lemma}
\newtheorem{theorem}{Theorem}
\usepackage[colorlinks=true,allcolors=blue]{hyperref}
\title{A sharp bound for the least signless Laplacian eigenvalue\\of triangle-free graphs}
\author{Dongxiu Cai\textsuperscript{1}%
\thanks{\textsuperscript{1} School of Mathematical Sciences, MOE-LSC,
SHL-MAC, Shanghai Jiao Tong University, 800 Dongchuan Road,
Shanghai 200240, P.R.~China. Emails:
\texttt{diudiutse@sjtu.edu.cn} (Dongxiu Cai) and
\texttt{czb911@sjtu.edu.cn} (Zhengbo Chen).}
\and Zhengbo Chen\textsuperscript{1}
\and Xiao-Dong Zhang\textsuperscript{1,2}%
\thanks{\textsuperscript{2} Corresponding author: Xiao-Dong Zhang.
Email: \texttt{xiaodong@sjtu.edu.cn}.}}
\date{}
\begin{document}
\begingroup
\makeatletter
\let\@fnsymbol\@gobble
\makeatother
\maketitle
\endgroup
\begin{abstract}
We prove that every finite simple triangle-free graph $G$ on $n\geq1$ vertices
satisfies $\qmin(G)\leq7n/50$, where $\qmin(G)$ is the least eigenvalue
of its signless Laplacian matrix $Q_G=D_G+A_G$, with $D_G$ the diagonal
matrix of vertex degrees and $A_G$ the adjacency matrix. The constant is sharp, since the Higman--Sims graph
and its balanced blow-ups attain equality, as shown by de Lima, Nikiforov
and Oliveira (2016). This proves the bound
conjectured by Balogh, Clemen, Lidick\'y, Norin and Volec
(2023). The proof uses five test column vectors associated
with an ordered edge to obtain a matrix Rayleigh inequality. Our use of
local Rayleigh inequalities together with flag-algebra semidefinite
programming is inspired by Zhou (2026). We use semidefinite
programming and rational approximation to find the required positive
semidefinite matrices, and complete the proof by exact correction and
verification.
\end{abstract}

\noindent\textbf{2020 Mathematics Subject Classification.}
Primary 05C50; Secondary 05C35, 90C22.

\section{Introduction}\label{sec:introduction}

Let $G$ be a finite simple graph on $n\geq1$ vertices, with adjacency
matrix $A_G$. We write $\qmin(G)$ for the least eigenvalue of its signless
Laplacian
\[
Q_G=\operatorname{diag}_{v\in V(G)}(\deg_G(v))+A_G,
\]
where $\deg_G(v)=|\{w\in V(G):\{v,w\}\in E(G)\}|$ is the degree of $v$,
and $\operatorname{diag}_{z\in S}(b_z)$ denotes the diagonal matrix indexed
by a finite set $S$, with entry $b_z$ at $(z,z)$ and zero at every off-diagonal position.
For every real column vector $u$ indexed by $V(G)$,
\[
u^{\mathsf T}
Q_Gu
=\sum_{\{v,w\}\in E(G)}(u_v+u_w)^2.
\]
If $G$ is bipartite, choosing $u$ to be $1$ on one part and $-1$ on the
other gives a zero Rayleigh quotient, so $\qmin(G)=0$ by the nonnegativity
above.
Conversely, if $G$ is connected and $\qmin(G)=0$, a nonzero zero-eigenvector $u$
satisfies $u_v=-u_w$ on every edge; connectedness makes all coordinates
nonzero, and their signs give a bipartition.
Upper bounds on
$\qmin(G)$ for triangle-free graphs are related to questions about sparse
vertex sets and the number of edges that must be deleted to obtain a
bipartite graph~\cite{BCLNV,Brandt}.

Brandt~\cite{Brandt} conjectured that every regular triangle-free graph
of order $n$ satisfies $\qmin(G)\leq4n/25$. For a $d$-regular graph,
the degree matrix is $dI$, and therefore
\[
\qmin(G)=d+\lambda_{\min}(A_G)
=\lambda_{\max}(A_G)+\lambda_{\min}(A_G).
\]
Consequently, Brandt's conjecture can also be formulated as a bound on
the sum of the largest and smallest adjacency eigenvalues. Balogh,
Clemen, Lidick\'y, Norin and Volec~\cite{BCLNV} proved the stronger bound
\[
\qmin(G)\leq\frac{15}{94}n
\]
for every triangle-free graph. Their argument uses a test column vector
associated with an edge to bound the least eigenvalue by a ratio of
induced-path counts, and then bounds that ratio by flag algebras.

The Higman--Sims graph has $100$ vertices, is $22$-regular and
triangle-free, and has least adjacency eigenvalue $-8$. Its least
signless Laplacian eigenvalue is therefore $14$. In their concluding
remarks, Balogh et al.~\cite{BCLNV} propose that
$\qmin(G)\leq0.14n$ holds for every triangle-free graph. They also show
that their edge test column vector gives the larger coefficient $42/275$ on
the Higman--Sims graph. Improving only the subsequent path-count
inequality cannot reach the conjectured coefficient using that same
spectral reduction.

We prove the sharp bound conjectured by Balogh, Clemen,
Lidick\'y, Norin and Volec in 2023~\cite{BCLNV}.
\begin{theorem}\label{thm:bound}
For every finite simple triangle-free graph $G$ with $n=|V(G)|\geq1$,
\[
\qmin(G)\leq\frac7{50}n.
\]
\end{theorem}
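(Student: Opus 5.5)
The plan is to follow the strategy sketched in the abstract: replace the single edge test vector of Balogh et al.\ by a small family of test vectors attached to an ordered edge, and combine them through a matrix Rayleigh inequality whose validity is then certified by flag algebras.

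\textbf{Test vectors and the matrix Rayleigh inequality.} Fix an ordered edge $(x,y)$ of $G$; isolated vertices and the edgeless case are trivial. Because $G$ is triangle-free, $N(x)$ and $N(y)$ are disjoint independent sets. I would define five column vectors $u^{(1)},\dots,u^{(5)}$ indexed by $V(G)$. They are indicator-type vectors of the natural local classes determined by $(x,y)$:
\begin{itemize}
\item $N(x)\setminus\{y\}$ and $N(y)\setminus\{x\}$;
\item the second neighbourhoods, split according to whether a vertex sees $N(x)$, $N(y)$, or both;
\item the remaining vertices.
\end{itemize}
The signs are chosen so that $u$ alternates across edges, as in the bipartite construction.

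For any coefficient vector $c\in\R^5$, the vector $u=\sum_i c_iu^{(i)}$ gives
\[
\qmin(G)\,u^{\mathsf T}u\le u^{\mathsf T}Q_Gu.
\]
Both sides are quadratic forms in $c$. Let $M_{xy}$ be the Gram matrix of the $u^{(i)}$ and $N_{xy}$ the matrix of $Q_G$ in this family. Then for every $c$ we have $c^{\mathsf T}(N_{xy}-\qmin M_{xy})c\ge0$.

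Summing over all ordered edges, it suffices to produce a single fixed PSD $5\times5$ matrix $P$ with
\[
\sum_{(x,y)}\tr\bigl(P(N_{xy}-\tfrac{7}{50}nM_{xy})\bigr)\le0,
\qquad
\sum_{(x,y)}\tr(PM_{xy})>0.
\]
Indeed, $\sum_{(x,y)}\tr\bigl(P(N_{xy}-\qmin M_{xy})\bigr)\ge0$, so combining these gives $\qmin\le 7n/50$.

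\textbf{Reduction to subgraph densities.} Each entry of $M_{xy}$ and $N_{xy}$ counts small configurations rooted at the edge $xy$, such as common-neighbourhood sizes and induced paths. After summing over edges, the inequality becomes a polynomial inequality in densities of triangle-free graphs on at most about five or six vertices, homogeneous of the appropriate degree in $n$. The choice of $P$ is a free parameter. I would optimize $P$ jointly with a flag-algebra certificate: sums of squares of flags rooted on small types such as an edge or an induced $P_3$, plus nonnegative density terms.

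\textbf{Exactness and sharpness.} The Higman--Sims graph attains equality, so the certificate must be tight there. Every square and slack term must vanish on its density profile. I would use this to fix the kernel of the SDP matrices in advance, and only then round to rationals. An exact correction step then projects onto that kernel so that the final identity holds in exact arithmetic, with all matrices verified PSD, for example by an $LDL^{\mathsf T}$ factorization.

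\textbf{Main obstacle.} The main difficulty is finding test vectors rich enough that the reduction is tight on Higman--Sims. The single edge vector of Balogh et al.\ already loses, giving $42/275$. The five vectors must therefore span a space that contains the true least eigenvector of the Higman--Sims graph, restricted in the sense of its distance partition from an edge. I would choose them by computing that eigenvector and matching its level sets. The second hardest step is the exact rounding of a tight SDP, which is delicate because equality forces nontrivial zero eigenvalues.
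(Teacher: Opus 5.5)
\textbf{There is a genuine gap: the choice of test vectors.} Your overall architecture matches the paper's: several test vectors per ordered edge, a matrix Rayleigh inequality with one fixed weight matrix, then a flag-algebra certificate with exact rounding. The failure is in the step that carries the content, and no SDP can repair it. It has two parts.

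First, a class such as ``second-neighbourhood vertices that see $N(x)$'' is an existential condition, $1-\prod_w(1-A_{zw}A_{wx})$. The Gram and Rayleigh entries it produces are therefore not finite combinations of bounded-order subgraph densities, and the reduction to flag algebras does not go through.

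Second, and decisively, equality holds not only for the Higman--Sims graph $K$ but for every blow-up $K[t]$. On $K[t]$ you have $Q-\frac7{50}nI\succeq0$, so every summand $\tr\bigl(P(N_{xy}-\frac7{50}nM_{xy})\bigr)$ is nonnegative. Your required inequality therefore forces $\sum_ic_iu^{(i)}$ into the $\qmin$-eigenspace for every $c$ in the range of $P$. These eigenvectors are lifts of $-8$-eigenvectors of $K$, so they are constant on blobs. Relative to an edge $xy$, the relevant one is $-3$ on the blob of $x$, $3$ on the blob of $y$, $1$ on the other blobs of $N(x)$, and $-1$ on the other blobs of $N(y)$. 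Already on $K$ itself you need $e_x,e_y$ in the span, and your list omits them.

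For $t\geq2$, the twins of $y$ lie in $N(x)\setminus\{y\}$ and the twins of $x$ lie in $N(y)\setminus\{x\}$. No class defined by adjacency to $x,y$, or by seeing $N(x),N(y)$, separates them from the other vertices of those sets. So a blob-constant vector $v$ in your span is constant, say $c$, on $N_K(x)$, constant $d$ on $N_K(y)$, and constant $e$ on the rest. Evaluating $A_Kv=-8v$ at $x$ and $y$ gives $22c=-8d$ and $22d=-8c$, hence $c=d=0$, and then $e=0$. Thus $\sum_ic_iu^{(i)}=0$ for all $c$ in the range of $P$, so $\tr(PM_{xy})=0$ for every edge. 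Your two requirements are therefore incompatible on $K[2]$ for every positive semidefinite $P$.

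\textbf{The missing idea.} The test vectors should be polynomials in the normalized adjacency matrix applied to $A_{:x}-A_{:y}$. Their coordinates are then codegree expressions, which are homomorphism densities, and they are automatically constant on twin classes.

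The paper's key vector is $U_0^{xy}=(A/n-\frac1{50}I)(A_{:x}-A_{:y})$, with coordinates
\[
\frac1n\bigl(|N(z)\cap N(x)|-|N(z)\cap N(y)|\bigr)-\frac1{50}(A_{zx}-A_{zy}).
\]
These coordinates separate the blob of $x$ (codegree $22t$ with $x$) from the rest of $N(y)$ (codegree $6t$). They give $\frac{24}{100}$ versus $\frac{8}{100}$, which is exactly the ratio $3$ above. The factor $A/n-\frac1{50}I$ annihilates the nonprincipal normalized eigenvalue $\frac1{50}$ of every $K[t]$, so $U_0^{xy}$ lies exactly in the minimal eigenspace.

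The other four vectors vanish on the regular extremal graphs and give the SDP room on irregular graphs. These are $U_1,U_2,U_4$, built from degree differences, and $U_3=(A/n+\frac2{25}I)U_0^{xy}$. With this choice, every entry becomes a combination of homomorphism densities on at most eight vertices. The certificate is then checked on all $410$ triangle-free graphs of order eight, not on graphs of order five or six.
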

The following known examples show that the bound in
Theorem~\ref{thm:bound} is sharp.
\begin{theorem}[{\cite{deLimaNikiforovOliveira}}]\label{thm:sharpness}
Let $K$ be the Higman--Sims graph. For every integer $t\geq1$, let $K[t]$
be obtained by replacing each vertex of $K$ by $t$ independent vertices
and each edge of $K$ by the complete bipartite graph between the corresponding
sets of copies. Then $K[t]$ is triangle-free, has $100t$ vertices, and satisfies
\[
\qmin(K[t])=14t=\frac7{50}|V(K[t])|.
\]
\end{theorem}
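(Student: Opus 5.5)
We prove Theorem~\ref{thm:sharpness} in three steps: triangle-freeness and order, regularity, and the spectrum of the blow-up. First, $K[t]$ is triangle-free. Any three vertices of $K[t]$ lie over vertices of $K$. Copies of the same vertex of $K$ are non-adjacent, so the three vertices of a triangle in $K[t]$ would have to lie over three distinct vertices of $K$, pairwise adjacent. That would be a triangle in $K$, which does not exist because $K$ is triangle-free. The vertex count $100t$ is immediate.

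For the spectrum, write $A_{K[t]}=A_K\otimes J_t$, where $J_t$ is the $t\times t$ all-ones matrix. Every vertex of $K[t]$ has degree $22t$, so $K[t]$ is $22t$-regular. Hence $Q_{K[t]}=22t\,I+A_K\otimes J_t$, and it suffices to find the least eigenvalue of $A_K\otimes J_t$.

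The eigenvalues of a Kronecker product are the pairwise products of the factors' eigenvalues. Those of $J_t$ are $t$ (once) and $0$ (with multiplicity $t-1$). So the spectrum of $A_K\otimes J_t$ consists of $t\lambda$ for $\lambda$ in the spectrum of $A_K$, together with $0$ with multiplicity $100(t-1)$.

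We use the standard facts about the Higman--Sims graph quoted in the introduction. It is $22$-regular with least adjacency eigenvalue $-8$; its full spectrum is $22^1,2^{77},(-8)^{22}$. Since $-8<0$, the least eigenvalue of $A_K\otimes J_t$ is $-8t$. Therefore
\[
\qmin(K[t])=22t-8t=14t=\tfrac{7}{50}\cdot 100t .
\]

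The only non-routine input is the value $\lambda_{\min}(A_K)=-8$. We take it as known: it follows from $K$ being strongly regular with parameters $(100,22,0,6)$, whose eigenvalues solve $x^2-(\lambda-\mu)x-(k-\mu)=x^2+6x-16=0$, giving $2$ and $-8$.

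Alternatively, one can avoid Kronecker spectra. Let $u$ be an eigenvector of $A_K$ for $-8$, and lift it by giving every copy of a vertex the value of that vertex. This produces an eigenvector of $Q_{K[t]}$ with eigenvalue $14t$. All other eigenvectors are either lifts of eigenvectors of $A_K$ with larger eigenvalues, or vectors summing to zero on each class, which have eigenvalue $22t$.
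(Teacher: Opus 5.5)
Your proof is correct: $K[t]$ is $22t$-regular with $A_{K[t]}=A_K\otimes J_t$, so $\qmin(K[t])=22t+\min\{t\lambda_{\min}(A_K),0\}=22t-8t=14t$, and triangle-freeness lifts from $K$ exactly as you argue. The paper does not prove this statement; it cites de Lima, Nikiforov and Oliveira. In the introduction it only notes the same regular-graph reduction $\qmin(G)=d+\lambda_{\min}(A_G)$ for $K$ itself, so your Kronecker-product (or lifting) argument is a self-contained extension of that remark to every $t$.
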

Hence $7/50$ is the smallest possible universal coefficient.
In particular, the same sharp coefficient bounds
$\lambda_{\max}(A_G)+\lambda_{\min}(A_G)$ for regular triangle-free graphs.

The spectral part of the proof uses five test column vectors for each ordered
edge. Combining their Rayleigh inequalities produces two real symmetric
$5\times5$ matrices: one records the averaged inner products of the test
columns, and the other records their averaged signless Laplacian forms
after subtracting the proposed bound. We construct a fixed positive
definite matrix whose trace product with the second matrix is
nonpositive. Its trace product with the first matrix is strictly positive
when the graph has an edge. The matrix Rayleigh inequality then yields
the desired eigenvalue bound. One of the test column vectors is obtained by
applying a quadratic polynomial in the normalized adjacency matrix to
the difference of two adjacency column vectors. This polynomial vanishes at
the nonprincipal normalized adjacency eigenvalues of the equality
example. The other column vectors also incorporate differences and local
variation of vertex degrees.

To prove the trace inequality, we expand the matrix entries into rational
linear combinations of graph homomorphism densities. Each term uses at
most eight sampled positions. We then compare coefficients on the
complete list of $410$ triangle-free graphs of order eight. The
comparison expresses the required trace as the negative of a sum of
nonnegative quadratic forms in conditional flag probabilities. These
quadratic forms use explicit integer matrices whose positivity is
verified by strict diagonal dominance. The remaining assertions are
$410$ exact finite identities. The accompanying data and programs
specify the integer entries and check these identities using exact
arithmetic.

The use of conditional flag probabilities follows the averaging principle
of flag algebras~\cite{Razborov}. Our use of local Rayleigh inequalities
together with flag-algebra semidefinite programming is inspired by Zhou's
work on $\{C_3,C_5\}$-free graphs. Zhou first averages local Rayleigh
inequalities~\cite[Lemma~2.1, pp.~4--5]{Zhou}, and then searches jointly
for positive semidefinite matrices for the local Rayleigh expressions and
ordinary flag squares, followed by rationalization and exact coefficient
verification~\cite[Section~2.3, p.~7]{Zhou}. This is the method we draw on;
the five test column vectors associated with an ordered edge and their
$5\times5$ matrix constraint are constructed in the present paper.
Here the eight positions are sampled
independently with replacement from the original graph. Their sampled
values may coincide, while the positions remain distinct vertices of
the sampled graph. The resulting identities are exact for every
$n\geq1$, so an eight-vertex calculation proves the bound for all graph
orders. We prove the required sampling identities directly.

Section~\ref{sec:preliminary} gives the graph and matrix conventions,
defines the homomorphism and flag densities used in the proof, and
establishes the finite sampling and positivity lemmas.
Section~\ref{sec:proof} constructs the five test column vectors, derives the
matrix Rayleigh inequality, specifies the finite integer identities,
and proves the bound.

The programs and exact data used in the proof are available at\newline
\url{https://github.com/Diudiutse/triangle-free-signless-laplacian}.

\section{Preliminary}\label{sec:preliminary}
\paragraph{Graphs and labels.}
A finite simple graph $B$ is a pair $(V(B),E(B))$, where $V(B)$ is a finite set and
\[
E(B)\subseteq\{\{u,v\}:u,v\in V(B),\ u\ne v\}.
\]
Edges are unordered two-element sets, so this convention excludes loops and multiple edges, and its adjacency matrix is
\[
(A_B)_{uv}=\begin{cases}1,&\{u,v\}\in E(B),\\0,&\{u,v\}\notin E(B),\end{cases}
\qquad (u,v\in V(B)).
\]
For $S\subseteq V(B)$, the induced subgraph is defined by
\[
B[S]=\left(S,\ E(B)\cap\{\{u,v\}:u,v\in S,\ u\ne v\}\right),
\]
so it retains exactly the edges of $B$ with both endpoints in $S$.
A graph $B$ is triangle-free precisely when
\[
\left\{\{u,v\},\{u,w\},\{v,w\}\right\}\nsubseteq E(B)
\quad\text{for all distinct }u,v,w\in V(B).
\]
For an integer $s\geq0$, write $[s]=\{1,\ldots,s\}$, with $[0]=\varnothing$; a graph on $[s]$ is a concrete graph whose vertex names are these integers, so two such graphs are equal exactly when their edge sets are equal.
For a map $f:S\to T$ and a subset $R\subseteq S$, write
$f(R)=\{f(r):r\in R\}$ for the image of $R$ and
$f|_R:R\to T$, $r\mapsto f(r)$, for the restriction of $f$ to $R$.
A graph isomorphism from $B$ to $B'$ is a bijection preserving both adjacency and nonadjacency, and the set of all such isomorphisms is
\begin{equation}\label{eq:isomorphisms}
\operatorname{Iso}(B,B')=
\left\{\phi:V(B)\overset{\mathrm{bij}}{\longrightarrow}V(B'):
(A_B)_{uv}=(A_{B'})_{\phi(u)\phi(v)}
\ \text{for all }u,v\in V(B)\right\}.
\end{equation}
We write $B\cong B'$ when $\operatorname{Iso}(B,B')\ne\varnothing$, and this is weaker than equality of graphs on $[s]$, because an isomorphism may permute all vertex names.
\paragraph{Graph maps.}
For finite simple graphs $J,B$, define the set of graph homomorphisms by
\begin{equation}\label{eq:hom-set}
\operatorname{Hom}(J,B)=
\left\{\phi:V(J)\longrightarrow V(B):
\{\phi(u),\phi(v)\}\in E(B)
\ \text{for every }\{u,v\}\in E(J)\right\}.
\end{equation}
A homomorphism may identify nonadjacent vertices of $J$, but it cannot identify the endpoints of an edge, because an edge cannot be sent to a loop in a simple graph.
The injective homomorphisms and the induced embeddings are, respectively,
\begin{align}
\operatorname{Emb}(J,B)
&=\{\phi\in\operatorname{Hom}(J,B):\phi\text{ is injective}\},\notag\\
\operatorname{Emb}_{\mathrm{ind}}(J,B)
&=\left\{\phi:V(J)\hookrightarrow V(B):
(A_J)_{uv}=(A_B)_{\phi(u)\phi(v)}
\ \text{for every }u,v\in V(J)\right\}.
\label{eq:embedding-sets}
\end{align}
In particular, an induced embedding preserves nonedges as well as edges, whereas an injective homomorphism need only preserve edges.
For $n\geq s\geq0$, write
\[
(n)_s=n(n-1)\cdots(n-s+1)=\frac{n!}{(n-s)!},
\qquad (n)_0=1,
\]
which is the number of injections from an $s$-element set into an $n$-element set.
\paragraph{Homomorphism densities.}
For a graph $B$ of order $n\geq1$ and a graph $J$ of order $s$, define
\begin{equation}\label{eq:hom-density}
t(J,B)=\frac{|\operatorname{Hom}(J,B)|}{n^s},
\qquad
t_{\mathrm{inj}}(J,B)=\frac{|\operatorname{Emb}(J,B)|}{(n)_s}
\quad(n\geq s).
\end{equation}
Writing $V(J)=[s]$ converts the first count to the explicit sum
\begin{equation}\label{eq:hom-product}
t(J,B)=\frac1{n^s}\sum_{Y_1,\ldots,Y_s\in V(B)}
\prod_{\{a,b\}\in E(J)}(A_B)_{Y_aY_b}.
\end{equation}
For a graph $J$ on $[s]$ and $n\geq s$, the ordinary induced-subgraph density is
\begin{equation}\label{eq:induced-density}
p(J,B)=
\frac{\left|\{S\subseteq V(B):|S|=s,\ B[S]\cong J\}\right|}
{\binom ns}.
\end{equation}
\begin{lemma}\label{lem:induced-normalization}
If $|V(J)|=s\leq n=|V(B)|$, then
\[
p(J,B)=
\frac{s!}{|\operatorname{Iso}(J,J)|}
\frac{|\operatorname{Emb}_{\mathrm{ind}}(J,B)|}{(n)_s}.
\]
\end{lemma}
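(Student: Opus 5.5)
The plan is to prove the identity by double counting, through the map sending an induced embedding to its image set. Let $\mathcal S=\{S\subseteq V(B):|S|=s,\ B[S]\cong J\}$, so that $p(J,B)=|\mathcal S|/\binom ns$. Define
\[
\Phi:\operatorname{Emb}_{\mathrm{ind}}(J,B)\to\mathcal S,\qquad \phi\mapsto\phi(V(J)).
\]
We first check that $\Phi$ is well defined. Since $\phi$ is injective, $|\phi(V(J))|=s$. The adjacency condition in \eqref{eq:embedding-sets} says that $\phi$, viewed as a bijection $V(J)\to\phi(V(J))$, preserves adjacency and nonadjacency. Hence it lies in $\operatorname{Iso}(J,B[\phi(V(J))])$, so $B[\phi(V(J))]\cong J$.

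Next we show that every fibre of $\Phi$ has exactly $|\operatorname{Iso}(J,J)|$ elements. Fix $S\in\mathcal S$. The fibre $\Phi^{-1}(S)$ is in natural bijection with $\operatorname{Iso}(J,B[S])$: an induced embedding with image $S$ is the same thing as an isomorphism $J\to B[S]$ followed by the inclusion $S\hookrightarrow V(B)$, and conversely. Because $S\in\mathcal S$, we may pick some $\psi_0\in\operatorname{Iso}(J,B[S])$. Then the map
\[
\operatorname{Iso}(J,J)\to\operatorname{Iso}(J,B[S]),\qquad \sigma\mapsto\psi_0\circ\sigma,
\]
is a bijection, with inverse $\psi\mapsto\psi_0^{-1}\circ\psi$. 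Both directions only use that compositions and inverses of adjacency-preserving bijections are again adjacency-preserving bijections, which follows from \eqref{eq:isomorphisms}.

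Summing the fibre sizes over $\mathcal S$ gives
\[
|\operatorname{Emb}_{\mathrm{ind}}(J,B)|=|\mathcal S|\cdot|\operatorname{Iso}(J,J)|.
\]
Substituting $\binom ns=(n)_s/s!$ into the definition \eqref{eq:induced-density} then yields
\[
p(J,B)=\frac{s!\,|\mathcal S|}{(n)_s}=\frac{s!}{|\operatorname{Iso}(J,J)|}\,\frac{|\operatorname{Emb}_{\mathrm{ind}}(J,B)|}{(n)_s},
\]
as claimed.

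The only step requiring care is the fibre bijection: one must check that restricting the codomain of an induced embedding gives an isomorphism onto the induced subgraph, including the nonedges. This holds because $B[S]$ keeps exactly the edges of $B$ inside $S$. The edge case $s=0$ also works: both sides equal $1$, since the empty map is the unique embedding and the unique automorphism.
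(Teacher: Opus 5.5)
Your proposal is correct and follows essentially the same route as the paper: both group the induced embeddings by their image set $S$ and identify each fibre with $\operatorname{Iso}(J,J)$ by composing with a fixed isomorphism $J\to B[S]$, then divide by $(n)_s=s!\binom ns$. You make explicit the identification of the fibre with $\operatorname{Iso}(J,B[S])$ and the case $s=0$, both of which the paper leaves implicit.
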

\noindent\emph{Proof.}
For each $S\subseteq V(B)$ with $|S|=s$ and $B[S]\cong J$, choose
$\psi\in\operatorname{Iso}(J,B[S])$.
The map $\phi\mapsto\psi^{-1}\circ\phi$ is a bijection from
$\operatorname{Iso}(J,B[S])$ to $\operatorname{Iso}(J,J)$, so
\[
|\operatorname{Emb}_{\mathrm{ind}}(J,B)|
=|\operatorname{Iso}(J,J)|
\left|\{S\subseteq V(B):|S|=s,\ B[S]\cong J\}\right|.
\]
Dividing by $(n)_s=s!\binom ns$ gives the stated formula.\hfill$\square$

\paragraph{Sampling on positions.}
For a nonempty graph $B$ and a tuple $X=(X_1,\ldots,X_h)\in V(B)^h$, define the graph on the tuple positions by
\begin{equation}\label{eq:position-graph}
H_X=\left([h],\ 
\left\{\{a,b\}:1\leq a<b\leq h,\ \{X_a,X_b\}\in E(B)\right\}\right).
\end{equation}
For independent uniform vertex samples, the expectation and probability of any function or event are the explicit finite quantities
\[
\E_X f(X)=\frac1{|V(B)|^h}\sum_{X\in V(B)^h}f(X),
\qquad
\Pr(X\in\mathcal A)=\frac{|\mathcal A|}{|V(B)|^h}
\quad(\mathcal A\subseteq V(B)^h).
\]
Sampling with replacement includes all tuples in $V(B)^h$, whereas sampling without replacement restricts to tuples with pairwise distinct entries and divides by $(|V(B)|)_h$.
The ambient graph $B$ is fixed whenever $H_X$ is used, and for $n=|V(B)|\geq h$ the two sampling conventions are related by
\[
\Pr(H_X\cong J\mid X_1,\ldots,X_h\text{ are pairwise distinct})=p(J,B),
\]
because each $h$-element subset admits exactly $h!$ orderings.
Let $X$ and $Y$ both be uniformly distributed on $V(B)^h$.
For every permutation $\rho$ of $[h]$, the coordinate rearrangement $X\mapsto(X_{\rho(1)},\ldots,X_{\rho(h)})$ is a bijection of $V(B)^h$, and therefore
\begin{equation}\label{eq:exchangeability}
\E_X f(X_{\rho(1)},\ldots,X_{\rho(h)})
=\frac1{|V(B)|^h}\sum_{Y\in V(B)^h}f(Y)=\E_Yf(Y).
\end{equation}
\paragraph{Finite graph catalogs.}
The collection of all triangle-free graphs on the fixed set $[s]$ is
\begin{equation}\label{eq:graph-collection}
\mathcal G_s=
\left\{([s],E):
\begin{array}{l}
E\subseteq\{\{a,b\}:1\leq a<b\leq s\},\\
([s],E)\text{ is triangle-free}
\end{array}\right\}.
\end{equation}
For each $s=0,\ldots,8$, the finite catalog $\mathcal T_s\subseteq\mathcal G_s$ is required to satisfy
\begin{equation}\label{eq:catalog-uniqueness}
\left|\{H\in\mathcal T_s:B\cong H\}\right|=1
\qquad\text{for every }B\in\mathcal G_s,
\end{equation}
so the catalog contains one concrete edge set on $[s]$ from each ordinary isomorphism class.
The vertex names $1,\ldots,s$ have already been specified in \eqref{eq:graph-collection}; the choice in \eqref{eq:catalog-uniqueness} concerns which edge set represents a class, and does not choose a new set of vertex labels.
If the ambient graph $B$ is triangle-free, then $H_X$ is triangle-free: three pairwise adjacent positions would have three distinct values, since adjacent positions cannot have equal values, and those three values would form a triangle in $B$.
Thus, for $0\leq h\leq8$, independent $h$-tuples from any nonempty triangle-free $B$ give a partition
\begin{equation}\label{eq:source-partition}
V(B)^h=\bigsqcup_{H\in\mathcal T_h}\{X\in V(B)^h:H_X\cong H\},
\qquad
\sum_{H\in\mathcal T_h}\Pr(H_X\cong H)=1.
\end{equation}
The following finite sampling identity uses the same averaging idea as
\cite[proof of Proposition~11.32]{Lovasz}.
We give a direct proof using independent uniform samples from $V(B)$.
\begin{lemma}\label{lem:source-lift}
Let $B$ be a nonempty triangle-free graph, let $1\leq h\leq8$, and let $J$ be a simple graph of order $s\leq h$, including $s=0$.
For independent uniform $X_1,\ldots,X_h\in V(B)$,
\begin{equation}\label{eq:exact-source-lift}
t(J,B)=\E_X t_{\mathrm{inj}}(J,H_X)
=\sum_{H\in\mathcal T_h}t_{\mathrm{inj}}(J,H)\Pr(H_X\cong H).
\end{equation}
\end{lemma}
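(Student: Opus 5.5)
We prove the first equality by an averaging argument over injections, and the second by regrouping according to the partition \eqref{eq:source-partition}. For the first equality, write $V(J)=[s]$. For a tuple $X\in V(B)^h$, we have $t_{\mathrm{inj}}(J,H_X)=\frac1{(h)_s}\sum_{\iota}\ind[\iota\in\operatorname{Emb}(J,H_X)]$, where $\iota$ ranges over all injections $[s]\hookrightarrow[h]$. When $s=0$ both sides equal $1$, so assume $s\geq1$.

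The key observation is that, for a fixed injection $\iota$, we have $\iota\in\operatorname{Emb}(J,H_X)$ exactly when $\{X_{\iota(a)},X_{\iota(b)}\}\in E(B)$ for every $\{a,b\}\in E(J)$. The distinctness of $\iota(a),\iota(b)$ is automatic, and by \eqref{eq:position-graph} an edge of $H_X$ between positions $\iota(a)\neq\iota(b)$ means precisely that the values are adjacent in $B$. In other words, $\iota\in\operatorname{Emb}(J,H_X)$ if and only if the map $a\mapsto X_{\iota(a)}$ lies in $\operatorname{Hom}(J,B)$. Equal values at non-adjacent positions are allowed here, which matches the fact that homomorphisms may identify nonadjacent vertices. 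Adjacent positions automatically carry distinct values since $B$ has no loops.

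Next, for fixed $\iota$, the subtuple $(X_{\iota(1)},\ldots,X_{\iota(s)})$ is uniform on $V(B)^s$, because the coordinates are independent and the positions $\iota(a)$ are distinct. This is essentially \eqref{eq:exchangeability} combined with marginalizing out the other $h-s$ coordinates, each of which contributes a factor $|V(B)|^{h-s}$. Hence $\Pr(\iota\in\operatorname{Emb}(J,H_X))=|\operatorname{Hom}(J,B)|/n^s=t(J,B)$, using \eqref{eq:hom-product}. Averaging over the $(h)_s$ injections and applying linearity of expectation gives $\E_X t_{\mathrm{inj}}(J,H_X)=t(J,B)$.

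For the second equality, we note that $t_{\mathrm{inj}}(J,\cdot)$ is an isomorphism invariant: composing with an isomorphism $H_X\to H$ bijects $\operatorname{Emb}(J,H_X)$ onto $\operatorname{Emb}(J,H)$. Since $B$ is triangle-free, each $H_X$ is triangle-free and lies in $\mathcal G_h$. By \eqref{eq:catalog-uniqueness} it is isomorphic to exactly one $H\in\mathcal T_h$, so the sets in \eqref{eq:source-partition} partition $V(B)^h$. Splitting the expectation over this partition gives the sum. The main point needing care is the homomorphism/embedding correspondence above, in particular that $\operatorname{Emb}$ in $H_X$ is injective on positions rather than on values. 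That is exactly why homomorphism densities $t(J,B)$, rather than injective densities, appear on the left, and why the identity holds exactly for every $n\geq1$, including $n<h$.
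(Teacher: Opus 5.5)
Your proof is correct and follows the paper's argument: for each fixed injection $\iota:[s]\hookrightarrow[h]$, the subtuple $(X_{\iota(a)})_a$ is uniform on $V(B)^s$, so the event $\iota\in\operatorname{Emb}(J,H_X)$ has probability $t(J,B)$. Averaging over the $(h)_s$ injections then gives the first equality, and isomorphism invariance of $t_{\mathrm{inj}}(J,\cdot)$ together with the partition \eqref{eq:source-partition} gives the second. Your separate treatment of $s=0$ is harmless but unnecessary, and writing the event as membership in $\operatorname{Emb}(J,H_X)$ rather than as the paper's product of adjacency entries is only a notational difference.
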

\noindent\emph{Proof.}
Fix an injection $\phi:V(J)\hookrightarrow[h]$; its images are different positions, so the values $(X_{\phi(a)})_{a\in V(J)}$ are independent uniform samples, although those values may coincide.
Using \eqref{eq:position-graph} and \eqref{eq:hom-product}, this gives
\[
\E_X\prod_{\{a,b\}\in E(J)}(A_{H_X})_{\phi(a)\phi(b)}
=\E_X\prod_{\{a,b\}\in E(J)}(A_B)_{X_{\phi(a)}X_{\phi(b)}}
=t(J,B).
\]
There are $(h)_s$ possible injections, so averaging their equal expectations gives
\begin{align*}
\E_X t_{\mathrm{inj}}(J,H_X)
&=\frac1{(h)_s}\sum_{\phi:V(J)\hookrightarrow[h]}
\E_X\prod_{\{a,b\}\in E(J)}(A_{H_X})_{\phi(a)\phi(b)}\\
&=\frac1{(h)_s}\,(h)_s\,t(J,B)=t(J,B).
\end{align*}
If $\eta:H\to H'$ is an isomorphism, then $\phi\mapsto\eta\circ\phi$ is a bijection from $\operatorname{Emb}(J,H)$ to $\operatorname{Emb}(J,H')$, so $t_{\mathrm{inj}}(J,H)=t_{\mathrm{inj}}(J,H')$.
This invariance makes $t_{\mathrm{inj}}(J,H_X)$ constant on each part of \eqref{eq:source-partition}, and summing the expectation over that partition proves the second equality in \eqref{eq:exact-source-lift}.\hfill$\square$

\paragraph{Types and ordered roots.}
A type of order $k$ is a specified triangle-free graph $\sigma=([k],E(\sigma))$, with $0\leq k\leq6$, and the proof uses each concrete $\sigma\in\mathcal T_k$ once.
A $\sigma$-rooted graph is a pair $(B,\theta)$, where $B$ is triangle-free and
\begin{equation}\label{eq:root-embedding}
\theta\in\operatorname{Emb}_{\mathrm{ind}}(\sigma,B),
\end{equation}
so the ordered roots are the vertices $\theta(1),\ldots,\theta(k)$ and their adjacency matrix is exactly $A_\sigma$.
Two $\sigma$-rooted graphs are isomorphic exactly when
\begin{equation}\label{eq:rooted-isomorphism}
(B,\theta)\cong_\sigma(B',\theta')
\quad\Longleftrightarrow\quad
\{\phi\in\operatorname{Iso}(B,B'):
\phi(\theta(i))=\theta'(i)\ (i\in[k])\}\ne\varnothing.
\end{equation}
A $\sigma$-flag is an isomorphism class under \eqref{eq:rooted-isomorphism}, and a flag of order $\ell$ has $\ell$ vertices, of which exactly $k$ are roots and $\ell-k$ are free vertices.
Every rooted graph $(B,\theta)$ of order $\ell$ can be represented on $[\ell]$: choose a bijection $\lambda:[\ell]\to V(B)$ with $\lambda(i)=\theta(i)$ for $i\in[k]$ and use
\[
\left([\ell],\{\{a,b\}:1\leq a<b\leq\ell,\ 
\{\lambda(a),\lambda(b)\}\in E(B)\}\right).
\]
If $\lambda'$ is another such bijection, then $(\lambda')^{-1}\circ\lambda$ fixes every $i\in[k]$ and is an isomorphism between the two graphs on $[\ell]$, so the resulting rooted isomorphism class does not depend on the ordering of the free vertices.
For $\ell\geq k$, the flag catalog $\mathcal F_{\sigma,\ell}$ consists of graphs satisfying
\begin{equation}\label{eq:flag-catalog}
\mathcal F_{\sigma,\ell}\subseteq
\{P\in\mathcal G_\ell:P[[k]]=\sigma\},
\qquad
\left|\{P\in\mathcal F_{\sigma,\ell}:(B,\iota)\cong_\sigma(P,\iota)\}\right|=1
\end{equation}
for every $B\in\mathcal G_\ell$ with $B[[k]]=\sigma$, where $\iota:[k]\hookrightarrow[\ell]$ is the inclusion $\iota(i)=i$.
For a catalog graph $P$, we also write $P$ for the rooted pair $(P,\iota)$, so $(B,\theta)\cong_\sigma P$ is always the explicit isomorphism condition in \eqref{eq:rooted-isomorphism}.
For the empty type $k=0$, there is a unique empty root map, \eqref{eq:rooted-isomorphism} reduces to ordinary graph isomorphism, and flag densities reduce to ordinary induced-subgraph densities.
\paragraph{Flag densities.}
If $(B,\theta)$ is $\sigma$-rooted, $|V(B)|=h$, and $P\in\mathcal F_{\sigma,\ell}$ with $k\leq\ell\leq h$, define
\begin{equation}\label{eq:flag-density}
p(P;(B,\theta))=
\frac{\left|\left\{S\subseteq V(B)\setminus\theta([k]):
|S|=\ell-k,\ 
(B[\theta([k])\cup S],\theta)\cong_\sigma P
\right\}\right|}
{\binom{h-k}{\ell-k}}.
\end{equation}
The induced graph in \eqref{eq:flag-density} retains every edge and nonedge among its roots and its free set, and $\theta$ remains the same ordered root map after restricting the graph.
For $P\in\mathcal F_{\sigma,\ell}$, $P'\in\mathcal F_{\sigma,\ell'}$, and $h\geq\ell+\ell'-k$, define the joint density by
\begin{equation}\label{eq:joint-flag-density}
p(P,P';(B,\theta))=
\frac{\displaystyle\left|\left\{(S,S'):
\begin{array}{l}
S,S'\subseteq V(B)\setminus\theta([k]),\quad S\cap S'=\varnothing,\\
|S|=\ell-k,\quad |S'|=\ell'-k,\\
(B[\theta([k])\cup S],\theta)\cong_\sigma P,\\
(B[\theta([k])\cup S'],\theta)\cong_\sigma P'
\end{array}\right\}\right|}
{\displaystyle\binom{h-k}{\ell-k}\binom{h-\ell}{\ell'-k}}.
\end{equation}
The denominator first chooses the free set for $P$ and then chooses the free set for $P'$ from the remaining vertices, so the numerator counts an ordered pair of disjoint free sets and tests no edges between those sets.
Every eligible free set, or pair of free sets, belongs to exactly one catalog flag, or pair of catalog flags, so
\[
\sum_{P\in\mathcal F_{\sigma,\ell}}p(P;(B,\theta))=1,
\qquad
\sum_{\substack{P\in\mathcal F_{\sigma,\ell}\\P'\in\mathcal F_{\sigma,\ell'}}}
p(P,P';(B,\theta))=1.
\]
\begin{lemma}[{\cite[Lemma~2.2]{Razborov}}]\label{lem:flag-chain}
Let $(B,\theta)$ be a $\sigma$-rooted graph of order $h$, and let $R$ range over $\mathcal F_{\sigma,d}$.
For $k\leq\ell\leq d\leq h$ and $P\in\mathcal F_{\sigma,\ell}$,
\[
p(P;(B,\theta))=\sum_Rp(P;R)\,p(R;(B,\theta)).
\]
If $P'\in\mathcal F_{\sigma,\ell'}$ and $\ell+\ell'-k\leq d\leq h$, then
\[
p(P,P';(B,\theta))
=\sum_Rp(P,P';R)\,p(R;(B,\theta)).
\]
\end{lemma}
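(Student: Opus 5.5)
The plan is a double-counting argument: both identities follow from choosing free sets in two stages, first an intermediate set of $d-k$ free vertices and then the smaller free sets inside it. Write $K=\theta([k])$ and $W=V(B)\setminus K$, so $|W|=h-k$.

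\emph{First identity.} Count pairs $(S,T)$ with $S\subseteq T\subseteq W$, $|S|=\ell-k$, $|T|=d-k$, and $(B[K\cup S],\theta)\cong_\sigma P$. We evaluate this count in two ways.
\begin{itemize}
\item Fixing $S$ first: each admissible $S$ extends to exactly $\binom{h-\ell}{d-\ell}$ sets $T$. So the count equals $p(P;(B,\theta))\binom{h-k}{\ell-k}\binom{h-\ell}{d-\ell}$.
\item Fixing $T$ first: the rooted graph $(B[K\cup T],\theta)$ is $\sigma$-isomorphic to exactly one $R\in\mathcal F_{\sigma,d}$, by \eqref{eq:flag-catalog}. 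Let $\psi$ be a $\sigma$-isomorphism from $R$ to $(B[K\cup T],\theta)$ fixing the roots. Then $S\mapsto\psi^{-1}(S)$ is a bijection onto the free $(\ell-k)$-subsets of $R$. It preserves the rooted isomorphism type of the induced subflag, because restricting $\psi$ gives a $\sigma$-isomorphism between the induced rooted subgraphs. Hence the number of good $S\subseteq T$ is $p(P;R)\binom{d-k}{\ell-k}$. Summing over $T$, grouped by the class $R$, gives $\sum_R p(P;R)\,p(R;(B,\theta))\binom{h-k}{d-k}\binom{d-k}{\ell-k}$.
\end{itemize}
The multinomial identity
\[
\binom{h-k}{\ell-k}\binom{h-\ell}{d-\ell}=\binom{h-k}{d-k}\binom{d-k}{\ell-k}
\]
holds because both sides count ordered choices of disjoint blocks of sizes $\ell-k$ and $d-\ell$ from $W$. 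Dividing by it gives the first formula.

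\emph{Second identity.} Count triples $(S,S',T)$ with $S,S'$ disjoint, $S\cup S'\subseteq T\subseteq W$, $|T|=d-k$, and both rooted type conditions satisfied. Note that $d\ge\ell+\ell'-k$ ensures $S\cup S'$ fits inside $T$.
\begin{itemize}
\item Fixing $(S,S')$ first: there are $\binom{h-\ell-\ell'+k}{d-\ell-\ell'+k}$ completions $T$.
\item Fixing $T$ first: transporting through $\psi$ as before, the number of good pairs inside $T$ is $p(P,P';R)\binom{d-k}{\ell-k}\binom{d-\ell}{\ell'-k}$.
\end{itemize}
The required normalization is again a multinomial identity:
\[
\binom{h-k}{\ell-k}\binom{h-\ell}{\ell'-k}\binom{h-\ell-\ell'+k}{d-\ell-\ell'+k}
=\binom{h-k}{d-k}\binom{d-k}{\ell-k}\binom{d-\ell}{\ell'-k}.
\]
Both sides equal $(h-k)!$ divided by the factorials of the block sizes $\ell-k$, $\ell'-k$, $d-\ell-\ell'+k$ and $h-d$.

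\emph{Main obstacle.} The only delicate point is the transport step. We must check that the $\sigma$-isomorphism class of $(B[K\cup S],\theta)$ equals that of $(R[[k]\cup\psi^{-1}(S)],\iota)$. This holds because $\psi$ fixes the roots pointwise and preserves both edges and nonedges, so no stray root relabelling enters. Since $p(P;R)$ depends only on the class $R$, it does not matter which catalog representative or which $\psi$ is chosen.
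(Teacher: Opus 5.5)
Your double-counting proof is correct. It is essentially the standard proof of Razborov's chain rule, which the paper cites rather than reproves: first choosing the intermediate $(d-k)$-set $T$ and then the free sets inside it is the deterministic form of Razborov's two-stage uniform sampling, and your binomial normalizations together with the root-fixing transport through $\psi$ are exactly the details that argument needs.
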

The flag product is defined by these joint-density coefficients
\cite[equation~(5) and Lemma~2.4]{Razborov}: at any order $d\geq\ell+\ell'-k$, the product of $P,P'$ is represented by
\[
\sum_{R\in\mathcal F_{\sigma,d}}p(P,P';R)\,R,
\]
and Lemma~\ref{lem:flag-chain} shows how these coefficients behave when evaluated on a larger rooted graph.
For a formal linear combination $\sum_Pc_PP$ with all $P$ of order $\ell$, the coefficient of $R$ in its square is
\[
\sum_{P,P'\in\mathcal F_{\sigma,\ell}}c_Pc_{P'}\,p(P,P';R),
\]
obtained by distributing the two finite sums.
\paragraph{Integer ordering counts.}
For each type $\sigma$ of order $k$, we use flags of the order
\[
\ell=\left\lfloor\frac{8+k}{2}\right\rfloor,
\qquad
2\ell-k\leq8,
\qquad
\mathcal F_\sigma=\mathcal F_{\sigma,\ell},
\]
with $\ell$ specifying the number of vertices of a flag whenever this type is being considered.
For any triangle-free graph $H$ with eight vertices and $P,P'\in\mathcal F_\sigma$, define the integer matrix entry
\begin{equation}\label{eq:flag-count}
C_H^\sigma(P,P')=
\left|\left\{\rho:
\begin{array}{l}
\rho:[8]\overset{\mathrm{bij}}{\longrightarrow}V(H),\\
(A_H)_{\rho(i)\rho(j)}=(A_\sigma)_{ij}\quad(1\leq i<j\leq k),\\
(H[\rho([\ell])],\rho|_{[k]})\cong_\sigma P,\\
(H[\rho([k]\cup\{\ell+1,\ldots,2\ell-k\})],\rho|_{[k]})\cong_\sigma P'
\end{array}\right\}\right|.
\end{equation}
The two induced graphs in \eqref{eq:flag-count} use the same roots and the disjoint free-position sets
\[
\{k+1,\ldots,\ell\},
\qquad
\{\ell+1,\ldots,2\ell-k\}.
\]
Clearly, $C_H^\sigma$ is symmetric, that is,
$C_H^\sigma(P,P')=C_H^\sigma(P',P)$ for all $P,P'\in\mathcal F_\sigma$.
\begin{lemma}\label{lem:ordering-normalization}
For every triangle-free graph $H$ of order eight,
\begin{equation}\label{eq:ordering-product}
\frac{C_H^\sigma(P,P')}{8!}
=\frac1{(8)_k}
\sum_{\theta\in\operatorname{Emb}_{\mathrm{ind}}(\sigma,H)}
p(P,P';(H,\theta)).
\end{equation}
\end{lemma}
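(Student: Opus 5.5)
We prove \eqref{eq:ordering-product} by double counting. Every bijection $\rho:[8]\to V(H)$ is split into its restriction to the roots and the ordering of the remaining positions. We may assume $2\ell-k\leq8$, as guaranteed by the choice of $\ell$, so that the joint density is defined with $h=8$.

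Fix $\theta\in\operatorname{Emb}_{\mathrm{ind}}(\sigma,H)$. Consider the bijections $\rho$ with $\rho|_{[k]}=\theta$. Such a $\rho$ automatically satisfies the root condition $(A_H)_{\rho(i)\rho(j)}=(A_\sigma)_{ij}$, since $\theta$ is an induced embedding. Conversely, every $\rho$ counted in \eqref{eq:flag-count} has $\rho|_{[k]}$ injective and adjacency-preserving on $[k]$ (diagonal entries are zero on both sides), hence $\rho|_{[k]}\in\operatorname{Emb}_{\mathrm{ind}}(\sigma,H)$. This gives
\[
C_H^\sigma(P,P')=\sum_{\theta\in\operatorname{Emb}_{\mathrm{ind}}(\sigma,H)}N_\theta,
\]
where $N_\theta$ counts bijections $\rho$ extending $\theta$ for which the two rooted induced subgraphs are isomorphic to $P$ and $P'$.

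To compute $N_\theta$, fix $\theta$ and assign to each extension $\rho$ the ordered pair of free sets
\[
S=\rho(\{k+1,\ldots,\ell\}),\qquad S'=\rho(\{\ell+1,\ldots,2\ell-k\}).
\]
These are disjoint subsets of $V(H)\setminus\theta([k])$ with $|S|=|S'|=\ell-k$. The rooted isomorphism conditions in \eqref{eq:flag-count} depend only on $(S,S')$, because the induced subgraph $H[\rho([\ell])]=H[\theta([k])\cup S]$ with root map $\theta$ does not depend on how the free positions are ordered. This uses the remark after the definition of representing rooted graphs on $[\ell]$.

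Each admissible pair $(S,S')$ arises from exactly $(\ell-k)!\,(\ell-k)!\,(8-2\ell+k)!$ extensions $\rho$: one orders the elements within $S$, within $S'$, and within the remaining vertices. Therefore
\[
N_\theta=((\ell-k)!)^2(8-2\ell+k)!\cdot p(P,P';(H,\theta))\binom{8-k}{\ell-k}\binom{8-\ell}{\ell-k},
\]
using the numerator in \eqref{eq:joint-flag-density} with $h=8$.

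The last step is the factorial identity
\[
((\ell-k)!)^2(8-2\ell+k)!\binom{8-k}{\ell-k}\binom{8-\ell}{\ell-k}=(8-k)!.
\]
It holds because the product telescopes:
\[
\frac{(8-k)!}{(8-\ell)!}\cdot\frac{(8-\ell)!}{(8-2\ell+k)!}\cdot(8-2\ell+k)!=(8-k)!.
\]
Hence $N_\theta=(8-k)!\,p(P,P';(H,\theta))$. Dividing the sum over $\theta$ by $8!$ and using $8!/(8-k)!=(8)_k$ yields \eqref{eq:ordering-product}.

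The only delicate point is checking that the root condition in \eqref{eq:flag-count} is exactly membership of $\rho|_{[k]}$ in $\operatorname{Emb}_{\mathrm{ind}}(\sigma,H)$, together with the well-definedness of the rooted class under reordering of the free vertices. Both follow directly from the stated definitions. The rest is routine bookkeeping.
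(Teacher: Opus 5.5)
Your proposal is correct and follows the paper's proof. Both arguments fix $\theta$ and an ordered pair of disjoint free sets, count the $((\ell-k)!)^2(8-2\ell+k)!$ compatible bijections, and telescope against the binomial denominator of the joint density to obtain $(8-k)!/8!=1/(8)_k$. Your extra checks, that the root condition in \eqref{eq:flag-count} is exactly $\rho|_{[k]}\in\operatorname{Emb}_{\mathrm{ind}}(\sigma,H)$ and that the flag conditions depend only on $(S,S')$, spell out points the paper leaves implicit.
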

\noindent\emph{Proof.}
We express the root-averaging convention of \cite[Section~2.2]{Razborov}
in terms of the ordering count \eqref{eq:flag-count}.
Fix $\theta\in\operatorname{Emb}_{\mathrm{ind}}(\sigma,H)$ and an ordered
pair of disjoint sets $S,S'\subseteq V(H)\setminus\theta([k])$, each of
size $\ell-k$.
There are exactly
\[
(\ell-k)!^2(8-2\ell+k)!
\]
bijections $\rho:[8]\to V(H)$ satisfying
\[
\rho|_{[k]}=\theta,\qquad
\rho(\{k+1,\ldots,\ell\})=S,\qquad
\rho(\{\ell+1,\ldots,2\ell-k\})=S',
\]
because the two free groups and the unused positions can be ordered
independently.
Thus \eqref{eq:flag-count} and \eqref{eq:joint-flag-density} give
\begin{align*}
\frac{C_H^\sigma(P,P')}{8!}
&=\frac{(\ell-k)!^2(8-2\ell+k)!}{8!}
\binom{8-k}{\ell-k}\binom{8-\ell}{\ell-k}
\sum_{\theta\in\operatorname{Emb}_{\mathrm{ind}}(\sigma,H)}
p(P,P';(H,\theta))\\
&=\frac{(8-k)!}{8!}
\sum_{\theta\in\operatorname{Emb}_{\mathrm{ind}}(\sigma,H)}
p(P,P';(H,\theta))\\
&=\frac1{(8)_k}
\sum_{\theta\in\operatorname{Emb}_{\mathrm{ind}}(\sigma,H)}
p(P,P';(H,\theta)).
\end{align*}
This proves \eqref{eq:ordering-product}.\hfill$\square$

Summing \eqref{eq:ordering-product} over all $P,P'\in\mathcal F_\sigma$ gives the useful normalization
\[
\frac1{8!}\sum_{P,P'}C_H^\sigma(P,P')
=\frac{|\operatorname{Emb}_{\mathrm{ind}}(\sigma,H)|}{(8)_k}.
\]
\paragraph{Conditional flag probabilities.}
Now fix a nonempty triangle-free graph $B$ of order $n$, a tuple $r=(r_1,\ldots,r_k)\in V(B)^k$, and a flag $P\in\mathcal F_\sigma$, and let $H_r$ be the position graph defined by \eqref{eq:position-graph}.
The conditional probability used on $B$ is defined by
\begin{equation}\label{eq:conditional-probability}
\Pr(P\mid r)=
\begin{cases}
\displaystyle
\frac{\left|\left\{Y\in V(B)^{\ell-k}:
(H_{(r,Y)},\iota)\cong_\sigma P\right\}\right|}
{n^{\ell-k}},
& H_r=\sigma,\\[6pt]
0,& H_r\ne\sigma,
\end{cases}
\end{equation}
where $(r,Y)=(r_1,\ldots,r_k,Y_1,\ldots,Y_{\ell-k})$ and $\iota(i)=i$ maps the roots into the position graph.
For $H_r=\sigma$, the sets of tuples $Y$ producing the catalog flags partition $V(B)^{\ell-k}$, so
\[
\sum_{P\in\mathcal F_\sigma}\Pr(P\mid r)=1,
\]
while every summand is zero when $H_r\ne\sigma$.
For $k=0$, the root tuple is the unique empty tuple, $H_r$ is the empty type, and the root average consists of one term with probability one.
The joint event for a tuple of eight samples is the set
\begin{equation}\label{eq:joint-event}
\mathcal A^\sigma_{P,P'}=
\left\{X\in V(B)^8:
\begin{array}{l}
H_{(X_1,\ldots,X_k)}=\sigma,\\
(H_{(X_1,\ldots,X_\ell)},\iota)\cong_\sigma P,\\
(H_{(X_1,\ldots,X_k,X_{\ell+1},\ldots,X_{2\ell-k})},\iota)
\cong_\sigma P'
\end{array}\right\}.
\end{equation}
The nonnegativity of averaged flag squares is a standard result
\cite[Theorem~3.1(a)]{Razborov}.
The next lemma records the exact identity needed for the sampling rule
\eqref{eq:position-graph}, including tuples whose values in $V(B)$ repeat.
Its proof verifies this identity with the normalization $8!$ in
\eqref{eq:flag-count}.

\begin{lemma}\label{lem:exact-flag-square}
For every nonempty triangle-free $B$, independent uniform $X\in V(B)^8$, and $r=(X_1,\ldots,X_k)$,
\begin{equation}\label{eq:flag-entry-average}
\frac1{8!}\sum_{H\in\mathcal T_8}\Pr(H_X\cong H)C_H^\sigma(P,P')
=\E_r\left[\ind_{H_r=\sigma}\Pr(P\mid r)\Pr(P'\mid r)\right].
\end{equation}
Consequently, for arbitrary real coordinates $(w_P)_{P\in\mathcal F_\sigma}$,
\begin{align}
&\frac1{8!}\sum_{H\in\mathcal T_8}\Pr(H_X\cong H)
\sum_{P,P'}w_PC_H^\sigma(P,P')w_{P'}\notag\\
&\qquad=\E_r\left[\ind_{H_r=\sigma}
\left(\sum_Pw_P\Pr(P\mid r)\right)^2\right]\geq0.
\label{eq:flag-square}
\end{align}
In particular,
\begin{equation}\label{eq:averaged-count-psd}
\sum_{H\in\mathcal T_8}\Pr(H_X\cong H)C_H^\sigma
\quad\text{is positive semidefinite}.
\end{equation}
\end{lemma}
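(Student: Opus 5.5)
The plan is to compute $\Pr(X\in\mathcal A^\sigma_{P,P'})$ for the joint event \eqref{eq:joint-event} in two ways. The right-hand side of \eqref{eq:flag-entry-average} will come from independence of the sample blocks. The left-hand side will come from a symmetrization over all orderings of the eight positions, in the spirit of Lemma~\ref{lem:source-lift}.

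\emph{Right-hand side.} Write $X=(r,Y,Y',Z)$ with $r=(X_1,\ldots,X_k)$, $Y=(X_{k+1},\ldots,X_\ell)$, $Y'=(X_{\ell+1},\ldots,X_{2\ell-k})$, and $Z$ the remaining $8-2\ell+k\geq0$ coordinates. These blocks are independent and uniform. The first condition of $\mathcal A^\sigma_{P,P'}$ depends only on $r$. The second depends only on $(r,Y)$, since the rooted graph $(H_{(r,Y)},\iota)$ is exactly the position graph of $(X_1,\ldots,X_\ell)$. The third likewise depends only on $(r,Y')$, after relabelling positions $\ell+1,\ldots,2\ell-k$ as $k+1,\ldots,\ell$; this relabelling fixes the roots, so it does not change the rooted isomorphism class. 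Conditioning on $r$ and using independence of $Y,Y'$ gives
\[
\Pr(X\in\mathcal A^\sigma_{P,P'})=\E_r\left[\ind_{H_r=\sigma}\Pr(P\mid r)\Pr(P'\mid r)\right]
\]
by \eqref{eq:conditional-probability}.

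\emph{Left-hand side.} For a permutation $\rho$ of $[8]$, let $X^\rho=(X_{\rho(1)},\ldots,X_{\rho(8)})$. By \eqref{eq:exchangeability},
\[
\Pr(X\in\mathcal A)=\frac1{8!}\sum_\rho\Pr(X^\rho\in\mathcal A).
\]
Moreover $H_{X^\rho}$ is the pullback of $H_X$ along $\rho$, so $\rho$ is an isomorphism $H_{X^\rho}\to H_X$ on positions. Hence
\[
\Pr(X\in\mathcal A)=\E_X\left[\frac{1}{8!}\left|\{\rho:X^\rho\in\mathcal A\}\right|\right].
\]
Fix $X$ and view $\rho:[8]\to[8]=V(H_X)$ as a bijection. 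The condition $X^\rho\in\mathcal A^\sigma_{P,P'}$ says precisely that $(A_{H_X})_{\rho(i)\rho(j)}=(A_\sigma)_{ij}$ for $i<j\leq k$, and that the two induced rooted subgraphs on $\rho([\ell])$ and on $\rho([k]\cup\{\ell+1,\ldots,2\ell-k\})$ are $\cong_\sigma P$ and $\cong_\sigma P'$. These are exactly the conditions in \eqref{eq:flag-count}, so the count equals $C^\sigma_{H_X}(P,P')$.

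Next, $C^\sigma_H$ is invariant under replacing $H$ by an isomorphic graph: composing $\rho$ with the isomorphism is a bijection between the two sets of orderings. Using the partition \eqref{eq:source-partition}, we get
\[
\Pr(X\in\mathcal A)=\frac1{8!}\sum_{H\in\mathcal T_8}\Pr(H_X\cong H)C^\sigma_H(P,P'),
\]
which proves \eqref{eq:flag-entry-average}. The step needing the most care is this identification of the ordering count with the event. One must check that the defining conditions refer only to the position graph, so that repeated values in $X$ cause no trouble. One must also check that relabelling free positions preserves the rooted isomorphism class.

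\emph{Consequences.} To get \eqref{eq:flag-square}, multiply \eqref{eq:flag-entry-average} by $w_Pw_{P'}$ and sum over $P,P'$. The right-hand side becomes $\E_r[\ind_{H_r=\sigma}(\sum_Pw_P\Pr(P\mid r))^2]$, which is visibly nonnegative. Since the vector $w$ is arbitrary, the symmetric matrix $\sum_H\Pr(H_X\cong H)C^\sigma_H$ has a nonnegative quadratic form, which gives \eqref{eq:averaged-count-psd}.
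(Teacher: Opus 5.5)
Your proposal is correct and follows the paper's proof essentially step for step: you identify $C^\sigma_{H_X}(P,P')$ with the number of orderings $\rho$ for which $X^\rho\in\mathcal A^\sigma_{P,P'}$, use the coordinate-permutation bijection together with isomorphism invariance and the partition over $\mathcal T_8$ to obtain $|\mathcal A^\sigma_{P,P'}|/n^8$, and evaluate this same quantity as $\E_r[\ind_{H_r=\sigma}\Pr(P\mid r)\Pr(P'\mid r)]$ from the block (Cartesian-product) decomposition of $X$. The consequences \eqref{eq:flag-square} and \eqref{eq:averaged-count-psd} then follow, as in the paper, by summing with weights $w_Pw_{P'}$.
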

\noindent\emph{Proof.}
Write $n=|V(B)|$ and
$S_8=\{\rho:[8]\overset{\mathrm{bij}}{\longrightarrow}[8]\}$.
For $\rho\in S_8$, the two induced graphs in \eqref{eq:flag-count}
are identified with the position graphs in \eqref{eq:joint-event} by the maps
\[
\begin{array}{ll}
[\ell]\longrightarrow\rho([\ell]),&
i\longmapsto\rho(i),\\[3pt]
[\ell]\longrightarrow\rho([k]\cup\{\ell+1,\ldots,2\ell-k\}),&
i\longmapsto
\begin{cases}
\rho(i),&i\leq k,\\
\rho(\ell+i-k),&i>k.
\end{cases}
\end{array}
\]
Both maps send root $i$ to $\rho(i)$ and preserve adjacency by
\eqref{eq:position-graph}; hence
\[
C_{H_X}^\sigma(P,P')
=\left|\left\{\rho\in S_8:
(X_{\rho(1)},\ldots,X_{\rho(8)})\in\mathcal A^\sigma_{P,P'}\right\}\right|.
\]
For every $\rho\in S_8$, the maps
\[
(X_1,\ldots,X_8)\longmapsto(X_{\rho(1)},\ldots,X_{\rho(8)}),
\qquad
(Y_1,\ldots,Y_8)\longmapsto(Y_{\rho^{-1}(1)},\ldots,Y_{\rho^{-1}(8)})
\]
are inverse bijections of $V(B)^8$, so
\[
\left|\left\{X\in V(B)^8:
(X_{\rho(1)},\ldots,X_{\rho(8)})\in\mathcal A^\sigma_{P,P'}\right\}\right|
=|\mathcal A^\sigma_{P,P'}|.
\]
Using the partition \eqref{eq:source-partition} and the isomorphism
invariance of $C_H^\sigma$, we obtain
\begin{align*}
&\frac1{8!}\sum_{H\in\mathcal T_8}\Pr(H_X\cong H)C_H^\sigma(P,P')\\
&=\frac1{8!\,n^8}\sum_{H\in\mathcal T_8}
\sum_{\substack{X\in V(B)^8\\H_X\cong H}}C_H^\sigma(P,P')\\
&=\frac1{8!\,n^8}\sum_{X\in V(B)^8}C_{H_X}^\sigma(P,P')\\
&=\frac1{8!\,n^8}\sum_{X\in V(B)^8}\sum_{\rho\in S_8}
\ind_{\{(X_{\rho(1)},\ldots,X_{\rho(8)})\in\mathcal A^\sigma_{P,P'}\}}\\
&=\frac1{8!\,n^8}\sum_{\rho\in S_8}
\left|\left\{X\in V(B)^8:
(X_{\rho(1)},\ldots,X_{\rho(8)})\in\mathcal A^\sigma_{P,P'}\right\}\right|\\
&=\frac1{8!\,n^8}\sum_{\rho\in S_8}|\mathcal A^\sigma_{P,P'}|
=\frac{|\mathcal A^\sigma_{P,P'}|}{n^8}.
\end{align*}

For each fixed $r\in V(B)^k$, the definition \eqref{eq:joint-event}
gives the Cartesian product
\[
\begin{aligned}
&\left\{(Y,Z,W):
\begin{array}{l}
Y,Z\in V(B)^{\ell-k},\quad W\in V(B)^{8-2\ell+k},\\
(r,Y,Z,W)\in\mathcal A^\sigma_{P,P'}
\end{array}\right\}\\
&\quad=
\begin{cases}
\begin{aligned}
&\{Y\in V(B)^{\ell-k}:(H_{(r,Y)},\iota)\cong_\sigma P\}\\
&{}\times\{Z\in V(B)^{\ell-k}:(H_{(r,Z)},\iota)\cong_\sigma P'\}\\
&{}\times V(B)^{8-2\ell+k},
\end{aligned}
&H_r=\sigma,\\[6pt]
\varnothing,&H_r\ne\sigma.
\end{cases}
\end{aligned}
\]
Here $(r,Y,Z,W)$ is concatenation in the displayed order; its blocks
occupy positions $1,\ldots,k$, $k+1,\ldots,\ell$,
$\ell+1,\ldots,2\ell-k$, and $2\ell-k+1,\ldots,8$.
Taking cardinalities and applying \eqref{eq:conditional-probability} gives
\begin{align*}
\frac{|\mathcal A^\sigma_{P,P'}|}{n^8}
&=\frac1{n^8}\sum_{r\in V(B)^k}\ind_{H_r=\sigma}\\
&\qquad{}\cdot
\left|\{Y\in V(B)^{\ell-k}:(H_{(r,Y)},\iota)\cong_\sigma P\}\right|\\
&\qquad{}\cdot
\left|\{Z\in V(B)^{\ell-k}:(H_{(r,Z)},\iota)\cong_\sigma P'\}\right|
\,n^{8-2\ell+k}\\
&=\frac1{n^8}\sum_{r\in V(B)^k}\ind_{H_r=\sigma}
\,n^{\ell-k}\Pr(P\mid r)\,
n^{\ell-k}\Pr(P'\mid r)\,n^{8-2\ell+k}\\
&=\frac1{n^k}\sum_{r\in V(B)^k}
\ind_{H_r=\sigma}\Pr(P\mid r)\Pr(P'\mid r)\\
&=\E_r\left[\ind_{H_r=\sigma}\Pr(P\mid r)\Pr(P'\mid r)\right],
\end{align*}
where $(\ell-k)+(\ell-k)+(8-2\ell+k)=8-k$.
This proves \eqref{eq:flag-entry-average}.
Summing \eqref{eq:flag-entry-average} with coefficients $w_Pw_{P'}$
gives \eqref{eq:flag-square} by linearity of expectation.
This finishes the proof of Lemma~\ref{lem:exact-flag-square}.\hfill$\square$

\paragraph{Matrix positivity and traces.}
A real symmetric matrix $B$ is positive semidefinite when $u^{\mathsf T}Bu\geq0$ for every real column vector $u$, and positive definite when $u^{\mathsf T}Bu>0$ for every nonzero $u$; $\tr(B)=\sum_iB_{ii}$ denotes its trace.
For real matrices $B,C$ whose products are defined and square, direct summation gives
\[
\tr(BC)=\sum_{i,j}B_{ij}C_{ji}
=\sum_{j,i}C_{ji}B_{ij}=\tr(CB),
\]
and for symmetric $B,C$ this is also $\sum_{i,j}B_{ij}C_{ij}$.
\begin{lemma}[{\cite[Example~2.24]{BoydVandenberghe}}]\label{lem:trace-positivity}
If $B,C$ are real symmetric positive semidefinite matrices of the same size, then $\tr(BC)\geq0$.
\end{lemma}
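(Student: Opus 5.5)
We prove Lemma~\ref{lem:trace-positivity} by factoring one of the two matrices through a square root, so that the trace becomes visibly a sum of nonnegative quadratic forms. Since $C$ is real symmetric, the spectral theorem gives an orthogonal matrix $U$ and a diagonal matrix $\Lambda=\operatorname{diag}(\lambda_1,\ldots,\lambda_m)$ with $C=U\Lambda U^{\mathsf T}$. Positive semidefiniteness of $C$, applied to the columns $u_i$ of $U$, gives $\lambda_i=u_i^{\mathsf T}Cu_i\geq0$ for every $i$. This lets us write
\[
C=\sum_{i=1}^m\lambda_iu_iu_i^{\mathsf T}.
\]

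Substituting this decomposition and using linearity of the trace together with the cyclic identity $\tr(BC)=\tr(CB)$ recorded just before the lemma, each rank-one term gives $\tr(Bu_iu_i^{\mathsf T})=u_i^{\mathsf T}Bu_i$. Therefore
\[
\tr(BC)=\sum_{i=1}^m\lambda_i\,u_i^{\mathsf T}Bu_i .
\]
Each factor $\lambda_i$ is nonnegative, and each $u_i^{\mathsf T}Bu_i$ is nonnegative by the definition of positive semidefiniteness of $B$. So every summand is nonnegative, and $\tr(BC)\geq0$.

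The only step that is not pure bookkeeping is the rank-one trace identity $\tr(Buu^{\mathsf T})=u^{\mathsf T}Bu$. It follows by writing out $\sum_{i,j}B_{ij}u_ju_i$ entrywise. Equivalently, one can factor $C=LL^{\mathsf T}$ with $L=U\Lambda^{1/2}$, so that
\[
\tr(BC)=\tr(L^{\mathsf T}BL)=\sum_j \ell_j^{\mathsf T}B\ell_j\geq0,
\]
where $\ell_j$ are the columns of $L$. None of these steps presents a real obstacle.
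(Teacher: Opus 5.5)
Your proof is correct and is essentially the standard argument behind the citation: the paper gives no proof of its own and defers to Boyd--Vandenberghe's Example~2.24, which likewise writes one factor as $\sum_i\lambda_i q_iq_i^{\mathsf T}$ with $\lambda_i\geq0$ and reads off $\tr(BC)=\sum_i\lambda_i q_i^{\mathsf T}Bq_i\geq0$. The appeal to cyclicity is not actually needed, since the identity $\tr(Buu^{\mathsf T})=u^{\mathsf T}Bu$ already follows from the entrywise computation you give.
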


\begin{lemma}\label{lem:integer-dominance}
Let $X$ be a symmetric integer matrix and let $q>0$ be an integer.
If every row satisfies
\[
q+X_{ii}-\sum_{j\ne i}|X_{ij}|>0,
\]
then $qI+X$ is positive definite.
\end{lemma}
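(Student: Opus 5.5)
The plan is to prove Lemma~\ref{lem:integer-dominance} directly from the definition of positive definiteness, using the standard Gershgorin-type estimate on the quadratic form. No integrality is actually needed; the hypothesis only says that $M=qI+X$ is a real symmetric matrix that is strictly diagonally dominant with positive diagonal. The integer phrasing simply matches how the lemma will be applied, with exact arithmetic.

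Write $M=qI+X$, so that $M_{ii}=q+X_{ii}$ and $M_{ij}=X_{ij}$ for $i\neq j$. The hypothesis then reads $M_{ii}>\sum_{j\neq i}|M_{ij}|$ for every $i$. Fix a nonzero real column vector $u$ and expand
\[
u^{\mathsf T}Mu=\sum_iM_{ii}u_i^2+\sum_{i\neq j}M_{ij}u_iu_j.
\]
Bound the off-diagonal part from below using $u_iu_j\geq-\tfrac12(u_i^2+u_j^2)$ after multiplying by $|M_{ij}|$:
\[
\sum_{i\neq j}M_{ij}u_iu_j\geq-\sum_{i\neq j}|M_{ij}|\,\frac{u_i^2+u_j^2}{2}=-\sum_iu_i^2\sum_{j\neq i}|M_{ij}|.
\]
The last equality uses the symmetry $|M_{ij}|=|M_{ji}|$ to collect the two halves onto the same index.

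Combining the two displays gives
\[
u^{\mathsf T}Mu\geq\sum_iu_i^2\Bigl(M_{ii}-\sum_{j\neq i}|M_{ij}|\Bigr).
\]
Every bracket on the right is strictly positive by hypothesis. Since $u\neq0$, at least one $u_i^2$ is positive, so the sum is strictly positive. Hence $qI+X$ is positive definite.

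There is no real obstacle. The only point needing care is the symmetrization step, where symmetry of $X$ is used to reassign the $u_j^2$ terms to row $i$.
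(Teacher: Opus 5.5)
Your proof is correct, but it takes a different route from the paper. The paper argues spectrally: $qI+X$ is real symmetric, so its eigenvalues are real. Gershgorin's theorem then places each eigenvalue $\lambda$ within $\sum_{j\ne i}|X_{ij}|$ of some diagonal entry $q+X_{ii}$, which gives $\lambda\geq q+X_{ii}-\sum_{j\ne i}|X_{ij}|>0$.

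You instead work directly with the quadratic form, using symmetry to fold the off-diagonal contribution back onto the diagonal. This is self-contained, avoids eigenvalues and the Gershgorin citation, and applies the paper's own definition of positive definiteness verbatim. The paper's version is shorter given the cited theorem, and it uses symmetry only to guarantee a real spectrum, whereas you use it in the reindexing step.

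Both arguments give the same quantitative conclusion,
\[
u^{\mathsf T}(qI+X)u\geq\min_i\Bigl(q+X_{ii}-\sum_{j\ne i}|X_{ij}|\Bigr)\|u\|^2,
\]
and, as you note, neither uses integrality.

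One wording fix. The inequality you need is $M_{ij}u_iu_j\geq-|M_{ij}|\,|u_iu_j|\geq-\frac12|M_{ij}|(u_i^2+u_j^2)$, which rests on $|u_iu_j|\leq\frac12(u_i^2+u_j^2)$. Stating only $u_iu_j\geq-\frac12(u_i^2+u_j^2)$ covers entries with $M_{ij}\geq0$. The case $M_{ij}<0$ needs the companion bound $u_iu_j\leq\frac12(u_i^2+u_j^2)$.
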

\noindent\emph{Proof.}
The matrix $qI+X$ is real symmetric, so all its eigenvalues are real.
By Gershgorin's theorem~\cite[Theorem~1.1]{Varga}, for every eigenvalue $\lambda$ there is a row $i$ such that
\[
|\lambda-(q+X_{ii})|\leq\sum_{j\ne i}|X_{ij}|.
\]
Hence
\[
\lambda\geq q+X_{ii}-\sum_{j\ne i}|X_{ij}|>0,
\]
so $qI+X$ is positive definite.\hfill$\square$

For any real matrix $V$ and any positive semidefinite $B$, the product $VBV^{\mathsf T}$ is positive semidefinite because
\[
u^{\mathsf T}VBV^{\mathsf T}u=(V^{\mathsf T}u)^{\mathsf T}B(V^{\mathsf T}u)\geq0,
\]
and it is positive definite when $B$ is positive definite and $V$ is square and invertible.
\paragraph{The graph in the theorem.}
In the proof, $G=(V,E(G))$ is a finite simple triangle-free graph with $n=|V|\geq1$, $A=A_G$, and
\[
\deg_G(v)=|\{u\in V:\{u,v\}\in E(G)\}|
=\sum_{u\in V}A_{vu}.
\]
The least eigenvalue of the real symmetric matrix
\[
Q_G=\operatorname{diag}_{v\in V}(\deg_G(v))+A
\]
is denoted by $\qmin(G)$.
The adjacency column vector $A_{:x}$ is indexed by $V$ and has coordinate $(A_{:x})(z)=A_{zx}$, while for a real column vector $u$ indexed by $V$,
\[
\|u\|^2=u^{\mathsf T}u=\sum_{z\in V}u(z)^2,
\]
and $I$ denotes the identity matrix of the size required by its displayed product.
Subscripts $x,y,z,\ldots$ on $\E$ mean independent uniform samples from $V$ with replacement.
In particular, each ordered pair $(x,y)\in V^2$ has probability $1/n^2$.
Since $A_{xy}=1$ when $\{x,y\}\in E(G)$ and $A_{xy}=0$ otherwise, for every function $f:V^2\to\R$,
\[
\E_{x,y}[A_{xy}f(x,y)]
=\frac1{n^2}\sum_{(x,y)\in V^2}A_{xy}f(x,y)
=\frac1{n^2}\sum_{\substack{(x,y)\in V^2\\\{x,y\}\in E(G)}}f(x,y).
\]
Each edge $\{x,y\}$ contributes both ordered pairs $(x,y)$ and $(y,x)$, and each term still has weight $1/n^2$.

\section{Proof of the main theorem}\label{sec:proof}

To prove Theorem~\ref{thm:bound}, we first give a sufficient condition
in Lemma~\ref{lem:sufficient-condition}.
In order to state this lemma, we introduce the test column vectors
and the two averaged matrices used below.

For every ordered pair $(x,y)\in V^2$, define the first test column vector by
\begin{equation}\label{eq:U0}
U_0^{xy}=\left(\frac An-\frac1{50}I\right)(A_{:x}-A_{:y}),
\end{equation}
and, for every $z\in V$, matrix multiplication gives
\begin{align*}
U_0^{xy}(z)
&=\sum_{t\in V}
\left(\frac An-\frac1{50}I\right)_{zt}
(A_{:x}-A_{:y})(t)\\
&=\sum_{t\in V}
\left(\frac{A_{zt}}n-\frac{I_{zt}}{50}\right)(A_{tx}-A_{ty})\\
&=\frac1n\sum_{t\in V}A_{zt}(A_{tx}-A_{ty})
-\frac1{50}\sum_{t\in V}I_{zt}(A_{tx}-A_{ty})\\
&=\frac1n\sum_{t\in V}A_{zt}(A_{tx}-A_{ty})
-\frac{A_{zx}-A_{zy}}{50},
\end{align*}
where $(A_{:x}-A_{:y})(t)=A_{tx}-A_{ty}$ and
$I_{zt}=1$ for $t=z$ and $I_{zt}=0$ for $t\ne z$.
If $A_{xy}=1$, triangle-freeness gives $A_{xt}A_{yt}=0$ for every $t\in V$, because a common neighbor of the edge endpoints would form a triangle.
Using this vanishing product, symmetry, $A_{xx}=A_{yy}=0$, $A_{xy}=A_{yx}=1$, and $A_{zt}^2=A_{zt}$, the endpoint coordinates are
\begin{align}
U_0^{xy}(x)
&=\frac1n\sum_t(A_{xt}^2-A_{xt}A_{yt})-\frac{0-1}{50}
=\frac{\deg_G(x)}n+\frac1{50}>0,\notag\\
U_0^{xy}(y)
&=\frac1n\sum_t(A_{yt}A_{xt}-A_{yt}^2)-\frac{1-0}{50}
=-\frac{\deg_G(y)}n-\frac1{50}<0.
\label{eq:U0-endpoints}
\end{align}
In particular, \eqref{eq:U0-endpoints} proves that $U_0^{xy}\ne0$ for every ordered edge.
Define the next two column vectors directly from the adjacency column vectors and the degree diagonal entries by
\begin{align}
U_1^{xy}
&=\frac{\deg_G(x)-\deg_G(y)}n(A_{:x}+A_{:y}),\notag\\
U_2^{xy}
&=\operatorname{diag}_{z\in V}\left(
\frac{\deg_G(z)}n-\frac{\deg_G(x)+\deg_G(y)}{2n}\right)(A_{:x}-A_{:y}).
\label{eq:U12}
\end{align}
The fourth column vector is
\begin{equation}\label{eq:U3}
U_3^{xy}=\left(\frac{A^2}{n^2}+\frac{3A}{50n}-\frac1{625}I\right)(A_{:x}-A_{:y}).
\end{equation}
The numerical coefficients in \eqref{eq:U3} can also be read from the explicit multiplication
\begin{align*}
\left(\frac An+\frac2{25}I\right)U_0^{xy}
&=\left(\frac An+\frac2{25}I\right)
\left(\frac An-\frac1{50}I\right)(A_{:x}-A_{:y})\\
&=\left(\frac{A^2}{n^2}
+\left(\frac2{25}-\frac1{50}\right)\frac An
-\frac2{25\cdot50}I\right)(A_{:x}-A_{:y})\\
&=U_3^{xy}.
\end{align*}
The last column vector is
\begin{align}
U_4^{xy}
&=\operatorname{diag}_{z\in V}\left(
\frac{\deg_G(z)}n-\frac{\deg_G(x)+\deg_G(y)}{2n}\right)U_0^{xy}\notag\\
&\quad+\frac{(\deg_G(x)-\deg_G(y))^2}{4n^2}(A_{:x}-A_{:y}),
\label{eq:U4}
\end{align}
and $U^{xy}$ is the $n\times5$ matrix with these columns $U_0^{xy},\ldots,U_4^{xy}$ in order.

For $i,j=0,\ldots,4$, define
\begin{align}
\mathsf D_{ij}
&=\E_{x,y}\left[\frac{A_{xy}}n(U_i^{xy})^{\mathsf T}U_j^{xy}\right],\notag\\
\mathsf F_{ij}
&=\E_{x,y}\left[\frac{A_{xy}}n(U_i^{xy})^{\mathsf T}
\left(\frac{Q_G}{n}-\frac7{50}I\right)
U_j^{xy}\right].
\label{eq:DF}
\end{align}
Since $Q_G$ is real symmetric, transposing either quadratic product in \eqref{eq:DF} exchanges $i,j$ without changing its value, so both $\mathsf D,\mathsf F$ are real symmetric $5\times5$ matrices.
\begin{lemma}\label{lem:sufficient-condition}
If there exists a real symmetric positive definite $5\times5$ matrix $B$
such that, for every finite simple triangle-free graph $G$ with
$n=|V(G)|\geq1$,
\[
\tr(B\mathsf F)\leq0,
\]
where $\mathsf F$ is the matrix defined from that graph $G$ by \eqref{eq:DF},
then Theorem~\ref{thm:bound} holds.
Here the same matrix $B$ is used for every $G$.
\end{lemma}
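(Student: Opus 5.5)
We argue by contradiction via a matrix Rayleigh inequality. Fix a triangle-free $G$ with $n\geq1$. If $G$ has no edge, then $Q_G=0$ and $\qmin(G)=0\leq 7n/50$, so assume $E(G)\ne\varnothing$. Write $\mu=\qmin(G)$. Since $\mu$ is the least eigenvalue of the real symmetric matrix $Q_G$, every real column vector $u$ satisfies $u^{\mathsf T}Q_Gu\geq\mu\|u\|^2$. Hence
\[
u^{\mathsf T}\left(\frac{Q_G}{n}-\frac7{50}I\right)u\geq\left(\frac{\mu}{n}-\frac7{50}\right)\|u\|^2 .
\]
Set $c=\mu/n-7/50$. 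The goal is to show $c\leq0$.

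First I lift the scalar inequality to matrices. Take the positive definite $B$ from the hypothesis and factor it as $B=LL^{\mathsf T}$, say by Cholesky or by the symmetric square root. For each ordered pair $(x,y)$, put $W^{xy}=U^{xy}L$, an $n\times5$ matrix, and apply the scalar inequality to each of its columns $w_k$. Summing over $k$ gives
\[
\tr\!\left(L^{\mathsf T}(U^{xy})^{\mathsf T}\Big(\tfrac{Q_G}{n}-\tfrac7{50}I\Big)U^{xy}L\right)\geq c\,\tr\!\left(L^{\mathsf T}(U^{xy})^{\mathsf T}U^{xy}L\right).
\]
By cyclicity of the trace, the left side equals $\tr(B\,M^{xy})$, where $M^{xy}=(U^{xy})^{\mathsf T}(Q_G/n-\tfrac7{50}I)U^{xy}$. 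Likewise the right side equals $c\,\tr(B\,N^{xy})$ with $N^{xy}=(U^{xy})^{\mathsf T}U^{xy}$. I then multiply by the nonnegative weight $A_{xy}/n$ and average over $x,y$. Because the trace is linear, definition \eqref{eq:DF} yields
\[
\tr(B\mathsf F)\geq c\,\tr(B\mathsf D).
\]

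Next I show that $\tr(B\mathsf D)>0$. Each $N^{xy}$ is a Gram matrix, so it is positive semidefinite, and Lemma~\ref{lem:trace-positivity} gives $\tr(BN^{xy})\geq0$. I also need strictness on at least one edge. For an ordered edge $(x,y)$, equation \eqref{eq:U0-endpoints} shows $U_0^{xy}\ne0$, so $N^{xy}_{00}=\|U_0^{xy}\|^2>0$ and $N^{xy}\ne0$. Since $B$ is positive definite, $\tr(BN^{xy})=\sum_k\|U^{xy}L e_k\|^2$, and this is strictly positive. The reason is that $U^{xy}L=0$ together with the invertibility of $L$ would force $U^{xy}=0$. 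Because $G$ has an edge, at least one ordered pair has $A_{xy}=1$, so the average satisfies $\tr(B\mathsf D)>0$.

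Finally, combining the two steps with the hypothesis $\tr(B\mathsf F)\leq0$ gives $c\,\tr(B\mathsf D)\leq\tr(B\mathsf F)\leq0$. Dividing by the positive number $\tr(B\mathsf D)$ gives $c\leq0$, that is, $\qmin(G)\leq 7n/50$. The only delicate point is the strict positivity of $\tr(B\mathsf D)$. Positive definiteness of $B$, as opposed to mere semidefiniteness, is exactly what lets the nonvanishing of the single column $U_0^{xy}$ supply it. Everything else is linear bookkeeping.
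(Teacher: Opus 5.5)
Your proof is correct and follows essentially the paper's argument. Both apply the Rayleigh inequality to linear combinations of the test columns, average with weight $A_{xy}/n$, and use $U_0^{xy}\ne0$ on ordered edges together with positive definiteness of $B$ to obtain $\tr(B\mathsf D)>0$. The difference is only in packaging: you factor $B=LL^{\mathsf T}$ and apply the Rayleigh inequality pointwise to the columns of $U^{xy}L$, which is the proof of Lemma~\ref{lem:trace-positivity} written out. The paper instead first shows that $\mathsf D\succeq0$ and $\mathsf F-(\qmin(G)/n-7/50)\mathsf D\succeq0$, and then uses $\tr(B\mathsf D)\geq\lambda_{\min}(B)\tr(\mathsf D)$.
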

\noindent\emph{Proof.}
Let $B$ be a matrix satisfying the hypotheses, and fix an arbitrary finite
simple triangle-free graph $G$ with $n=|V(G)|\geq1$.
If $E(G)=\varnothing$, then $Q_G=0$ and $\qmin(G)=0\leq7n/50$.
It therefore suffices to consider $E(G)\ne\varnothing$.
Choose an orthonormal eigenbasis $e_1,\ldots,e_n$ of $Q_G$, with corresponding eigenvalues $\lambda_1,\ldots,\lambda_n$, so every $\lambda_j\geq\qmin(G)$.
For every column vector $u\in\R^V$,
\[
u^{\mathsf T}\big(Q_G-\qmin(G)I\big)u
=\sum_{j=1}^n(\lambda_j-\qmin(G))(e_j^{\mathsf T}u)^2\geq0,
\]
and division by $n>0$ gives
\begin{equation}\label{eq:rayleigh}
u^{\mathsf T}\left(\frac{Q_G}{n}\right)u
\geq\frac{\qmin(G)}n\,u^{\mathsf T}u.
\end{equation}
Fix $u=(u_0,\ldots,u_4)^{\mathsf T}\in\R^5$.
The columns of $U^{xy}$ give
$U^{xy}u=\sum_{i=0}^4u_iU_i^{xy}$, so the first definition in
\eqref{eq:DF} yields the entrywise calculation
\begin{align*}
u^{\mathsf T}\mathsf Du
&=\sum_{i=0}^4\sum_{j=0}^4u_i\mathsf D_{ij}u_j\\
&=\sum_{i=0}^4\sum_{j=0}^4u_i u_j
\E_{x,y}\left[\frac{A_{xy}}n(U_i^{xy})^{\mathsf T}U_j^{xy}\right]\\
&=\E_{x,y}\left[\frac{A_{xy}}n
\sum_{i=0}^4\sum_{j=0}^4u_i u_j(U_i^{xy})^{\mathsf T}U_j^{xy}\right]\\
&=\E_{x,y}\left[\frac{A_{xy}}n
\left(\sum_{i=0}^4u_iU_i^{xy}\right)^{\mathsf T}
\left(\sum_{j=0}^4u_jU_j^{xy}\right)\right]\\
&=\E_{x,y}\left[\frac{A_{xy}}n
(U^{xy}u)^{\mathsf T}(U^{xy}u)\right]\\
&=\E_{x,y}\left[\frac{A_{xy}}n\|U^{xy}u\|^2\right]\\
&=\E_{x,y}\left[\frac{A_{xy}}n
\sum_{z\in V}\left(\sum_{i=0}^4u_iU_i^{xy}(z)\right)^2\right]\geq0.
\end{align*}
The last inequality uses $A_{xy}/n\geq0$ and the nonnegativity of every square;
hence $\mathsf D$ is positive semidefinite.
Its diagonal entries are averages of squared norms, and \eqref{eq:U0-endpoints} and $E(G)\ne\varnothing$ give
\begin{equation}\label{eq:positive-D}
\tr(\mathsf D)=\sum_{i=0}^4\E_{x,y}\left[\frac{A_{xy}}n\|U_i^{xy}\|^2\right]
\geq\E_{x,y}\left[\frac{A_{xy}}n\|U_0^{xy}\|^2\right]>0.
\end{equation}
For every $u=(u_0,\ldots,u_4)^{\mathsf T}\in\R^5$,
substitute both definitions in \eqref{eq:DF} and use
$U^{xy}u=\sum_{i=0}^4u_iU_i^{xy}$:
\begin{align}
&u^{\mathsf T}\left[\mathsf F-
\left(\frac{\qmin(G)}n-\frac7{50}\right)\mathsf D\right]u\notag\\
&=\sum_{i=0}^4\sum_{j=0}^4u_i u_j
\left[\mathsf F_{ij}-\left(\frac{\qmin(G)}n-\frac7{50}\right)\mathsf D_{ij}\right]\notag\\
&=\E_{x,y}\left[\frac{A_{xy}}n
\sum_{i=0}^4\sum_{j=0}^4u_i u_j
(U_i^{xy})^{\mathsf T}
\left(\frac{Q_G}{n}
-\frac7{50}I-\left(\frac{\qmin(G)}n-\frac7{50}\right)I\right)
U_j^{xy}\right]\notag\\
&=\E_{x,y}\left[\frac{A_{xy}}n
\left(\sum_{i=0}^4u_iU_i^{xy}\right)^{\mathsf T}
\left(\frac{Q_G}{n}
-\frac7{50}I-\left(\frac{\qmin(G)}n-\frac7{50}\right)I\right)
\left(\sum_{j=0}^4u_jU_j^{xy}\right)\right]\notag\\
&=\E_{x,y}\left[\frac{A_{xy}}n
(U^{xy}u)^{\mathsf T}
\left(\frac{Q_G}{n}
-\frac7{50}I-\frac{\qmin(G)}nI+\frac7{50}I\right)
(U^{xy}u)\right]\notag\\
&=\E_{x,y}\left[\frac{A_{xy}}n
(U^{xy}u)^{\mathsf T}
\left(\frac{Q_G-\qmin(G)I}{n}\right)
(U^{xy}u)\right]\geq0.
\label{eq:matrix-rayleigh}
\end{align}
The last inequality is pointwise before averaging: \eqref{eq:rayleigh}
applied to $U^{xy}u$ gives
\[
\begin{aligned}
&(U^{xy}u)^{\mathsf T}
\left(\frac{Q_G-\qmin(G)I}{n}\right)
(U^{xy}u)\\
&\quad=
(U^{xy}u)^{\mathsf T}
\left(\frac{Q_G}{n}\right)
(U^{xy}u)
-\frac{\qmin(G)}n\|U^{xy}u\|^2
\geq0,
\end{aligned}
\]
and $A_{xy}/n\geq0$ preserves this inequality when multiplying and averaging.
Because \eqref{eq:matrix-rayleigh} holds for every $u\in\R^5$, the symmetric matrix
$\mathsf F-(\qmin(G)/n-7/50)\mathsf D$ is positive semidefinite.

For the matrix $B$ in the hypotheses, linearity of the trace gives
\begin{align*}
&\tr(B\mathsf F)-
\left(\frac{\qmin(G)}n-\frac7{50}\right)\tr(B\mathsf D)\\
&=\tr\left(B\left[\mathsf F-
\left(\frac{\qmin(G)}n-\frac7{50}\right)\mathsf D\right]\right)\geq0.
\end{align*}
The last inequality follows from Lemma~\ref{lem:trace-positivity},
since both matrix factors are positive semidefinite.
Rearranging gives
\begin{equation}\label{eq:weighted-rayleigh}
\left(\frac{\qmin(G)}n-\frac7{50}\right)\tr(B\mathsf D)
\leq\tr(B\mathsf F).
\end{equation}
Since $B$ is positive definite, $B-\lambda_{\min}(B)I$ is positive semidefinite.
Since $\mathsf D$ is also positive semidefinite, Lemma~\ref{lem:trace-positivity} and linearity of the trace give
\[
\tr(B\mathsf D)-\lambda_{\min}(B)\tr(\mathsf D)
=\tr\bigl((B-\lambda_{\min}(B)I)\mathsf D\bigr)\geq0.
\]
Thus, by \eqref{eq:positive-D} and $\lambda_{\min}(B)>0$,
\[
\tr(B\mathsf D)\geq\lambda_{\min}(B)\tr(\mathsf D)>0.
\]
By hypothesis, $\tr(B\mathsf F)\leq0$.
Combining this with \eqref{eq:weighted-rayleigh} gives
\[
\left(\frac{\qmin(G)}n-\frac7{50}\right)\tr(B\mathsf D)
\leq\tr(B\mathsf F)\leq0.
\]
Dividing by the strictly positive quantity $\tr(B\mathsf D)$ gives
\[
\frac{\qmin(G)}n-\frac7{50}
\leq\frac{\tr(B\mathsf F)}{\tr(B\mathsf D)}\leq0,
\qquad\text{hence}\qquad
\qmin(G)\leq\frac7{50}n.
\]
Since $G$ was arbitrary, this proves Theorem~\ref{thm:bound}.\hfill$\square$

By Lemma~\ref{lem:sufficient-condition}, it remains to construct one real
symmetric positive definite $5\times5$ matrix $B$, independent of $G$,
for which $\tr(B\mathsf F)\leq0$ for every finite simple triangle-free graph
$G$ with at least one edge.

To calculate $\mathsf F$, we expand its entries into linear combinations
of homomorphism densities.
In the coordinate formulas below, write $U_i(z)=U_i^{xy}(z)$.
The variables listed in each expectation subscript are sampled independently
and uniformly from $V(G)$, with replacement.
In averages over $x,y$, suppress only the superscript $xy$ on each $U_i$, and first expand \eqref{eq:DF} as
\begin{align*}
\mathsf F_{ij}
&=\E_{x,y,z}\left[A_{xy}U_i(z)\frac{\deg_G(z)}nU_j(z)\right]\\
&\quad+\E_{x,y,z,t}\left[A_{xy}A_{zt}U_i(z)U_j(t)\right]
-\frac7{50}\E_{x,y,z}\left[A_{xy}U_i(z)U_j(z)\right].
\end{align*}
The three underlying coordinate sums are
\begin{align*}
(U_i^{xy})^{\mathsf T}\operatorname{diag}_{v\in V}(\deg_G(v))U_j^{xy}
&=\sum_z\deg_G(z)U_i(z)U_j(z),\\
(U_i^{xy})^{\mathsf T}AU_j^{xy}
&=\sum_{z,t}A_{zt}U_i(z)U_j(t),\\
(U_i^{xy})^{\mathsf T}U_j^{xy}
&=\sum_zU_i(z)U_j(z),
\end{align*}
and the factors $1/n$ in \eqref{eq:DF} give exactly the normalized averages in the first displayed expansion of $\mathsf F_{ij}$.
Substituting $\deg_G(z)/n=\E_tA_{zt}$ into the first term gives
\begin{align*}
\mathsf F_{ij}
&=\E_{x,y,z,t}[A_{xy}A_{zt}U_i(z)U_j(z)]\\
&\quad+\E_{x,y,z,t}[A_{xy}A_{zt}U_i(z)U_j(t)]
-\frac7{50}\E_{x,y,z}[A_{xy}U_i(z)U_j(z)].
\end{align*}
For fixed $x,y,z$, the first column vector has the expansion
\[
U_0(z)=\frac1n\sum_sA_{zs}(A_{sx}-A_{sy})-\frac1{50}(A_{zx}-A_{zy})
=\E_s[A_{zs}(A_{sx}-A_{sy})]-\frac1{50}(A_{zx}-A_{zy}).
\]
Replacing each degree in \eqref{eq:U12} by its adjacency sum gives
\begin{align*}
U_1(z)&=\E_s[(A_{xs}-A_{ys})(A_{zx}+A_{zy})],\\
U_2(z)&=\E_s\left[
\left(A_{zs}-\frac{A_{xs}+A_{ys}}2\right)(A_{zx}-A_{zy})\right].
\end{align*}
For fixed $x,y,z$, take the $z$-coordinate in \eqref{eq:U3} and use
\[
(A^2)_{zt}=\sum_{s\in V}A_{zs}A_{st},
\qquad
\sum_{t\in V}I_{zt}(A_{tx}-A_{ty})=A_{zx}-A_{zy}.
\]
Expanding the matrix entries, distributing the sum, and then normalizing gives
\begin{align*}
U_3(z)
&=\sum_{t\in V}
\left(\frac{A^2}{n^2}+\frac{3A}{50n}-\frac1{625}I\right)_{zt}
(A_{tx}-A_{ty})\\
&=\sum_{t\in V}
\left(\frac{(A^2)_{zt}}{n^2}+\frac{3A_{zt}}{50n}-\frac{I_{zt}}{625}\right)
(A_{tx}-A_{ty})\\
&=\frac1{n^2}\sum_{t\in V}(A^2)_{zt}(A_{tx}-A_{ty})
+\frac3{50n}\sum_{t\in V}A_{zt}(A_{tx}-A_{ty})\\
&\quad-\frac1{625}\sum_{t\in V}I_{zt}(A_{tx}-A_{ty})\\
&=\frac1{n^2}\sum_{t\in V}
\left(\sum_{s\in V}A_{zs}A_{st}\right)(A_{tx}-A_{ty})
+\frac3{50n}\sum_{t\in V}A_{zt}(A_{tx}-A_{ty})\\
&\quad-\frac1{625}(A_{zx}-A_{zy})\\
&=\frac1{n^2}\sum_{s,t\in V}A_{zs}A_{st}(A_{tx}-A_{ty})
+\frac3{50}\left(\frac1n\sum_{s\in V}A_{zs}(A_{sx}-A_{sy})\right)\\
&\quad-\frac1{625}(A_{zx}-A_{zy})\\
&=\E_{s,t}[A_{zs}A_{st}(A_{tx}-A_{ty})]
+\frac3{50}\E_s[A_{zs}(A_{sx}-A_{sy})]
-\frac1{625}(A_{zx}-A_{zy}).
\end{align*}
Replacing the normalized sums by expectations in \eqref{eq:U4} gives
\begin{align*}
U_4(z)
&=\E_s\left[A_{zs}-\frac{A_{xs}+A_{ys}}2\right]
\E_t\left[A_{zt}(A_{tx}-A_{ty})-\frac1{50}(A_{zx}-A_{zy})\right]\\
&\quad+\frac14(A_{zx}-A_{zy})
\E_s[A_{xs}-A_{ys}]\,\E_t[A_{xt}-A_{yt}].
\end{align*}
Combining these independent averages gives the fully expanded fifth column vector
\begin{align*}
U_4(z)
&=\E_{s,t}\left[
\left(A_{zs}-\frac{A_{xs}+A_{ys}}2\right)
\left(A_{zt}(A_{tx}-A_{ty})-\frac1{50}(A_{zx}-A_{zy})\right)\right]\\
&\quad+\frac14\E_{s,t}\left[
(A_{xs}-A_{ys})(A_{xt}-A_{yt})(A_{zx}-A_{zy})\right].
\end{align*}
Substitute the five expectation formulas above into the three terms of
$\mathsf F_{ij}$ and distribute the finite products.
By linearity of expectation, $\mathsf F_{ij}$ is a rational linear combination
of expectation monomials of the form
\[
\E_{v_1,\ldots,v_s}
\left[A_{v_{a_1}v_{b_1}}\cdots A_{v_{a_m}v_{b_m}}\right].
\]
Here $v_1,\ldots,v_s$ name the distinct positions occurring in the product;
all are sampled independently and uniformly, and their sampled values may
coincide. Inspecting the five column-vector formulas shows that each product obtained
from the first two terms of $\mathsf F_{ij}$ uses at most eight positions,
and each product obtained from its last term uses at most seven.
Since $A_{ab}\in\{0,1\}$, repeated factors can be replaced by one factor;
also $A_{ab}=A_{ba}$, so their order does not matter.
Terms containing $A_{aa}$ vanish because $A_{aa}=0$, and terms containing
$A_{ab}A_{bc}A_{ca}$ vanish because $G$ is triangle-free.
For each remaining term, form the simple graph $J$ on $[s]$ with edge set
\[
E(J)=\{\{a_r,b_r\}:1\leq r\leq m\}.
\]
The term is exactly its homomorphism density:
\[
\begin{aligned}
\E_{v_1,\ldots,v_s}
\left[\prod_{\{a,b\}\in E(J)}A_{v_av_b}\right]
&=\frac1{n^s}\sum_{(v_1,\ldots,v_s)\in V(G)^s}
\prod_{\{a,b\}\in E(J)}A_{v_av_b}\\
&=t(J,G).
\end{aligned}
\]
Thus, for $J\in\bigcup_{s=0}^8\mathcal T_s$, let $c_{ij,J}\in\mathbb Q$
be the sum of the rational coefficients of all terms whose graphs are
isomorphic to $J$, with $c_{ij,J}=0$ when there are no such terms. Then
\begin{equation}\label{eq:hom-expansion}
\mathsf F_{ij}=\sum_{J\in\bigcup_{s=0}^8\mathcal T_s}c_{ij,J}\,t(J,G).
\end{equation}
Exchanging $i,j$ commutes the factors in the degree and norm terms, while exchanging $z,t$ in the adjacency term and consistently relabeling its position variables gives a coefficient-preserving bijection of monomials, so $c_{ij,J}=c_{ji,J}$.

To express $\mathsf F$ as an average over $\mathcal T_8$, define, for each
$H\in\mathcal T_8$, the rational symmetric $5\times5$ matrix $F_H$ by
\begin{align}
(F_H)_{ij}
&=\sum_Jc_{ij,J}\,t_{\mathrm{inj}}(J,H)\notag\\
&=\sum_J\frac{c_{ij,J}}{(8)_{|V(J)|}}
\sum_{\phi:V(J)\hookrightarrow V(H)}
\prod_{\{a,b\}\in E(J)}(A_H)_{\phi(a)\phi(b)}.
\label{eq:FH}
\end{align}
Applying Lemma~\ref{lem:source-lift} to every summand of \eqref{eq:hom-expansion} and interchanging finite sums gives
\begin{align}
\mathsf F_{ij}
&=\sum_Jc_{ij,J}\sum_{H\in\mathcal T_8}
t_{\mathrm{inj}}(J,H)\Pr(H_X\cong H)\notag\\
&=\sum_{H\in\mathcal T_8}(F_H)_{ij}\Pr(H_X\cong H),
\qquad
\mathsf F=\sum_{H\in\mathcal T_8}F_H\Pr(H_X\cong H),
\label{eq:F-source-lift}
\end{align}
where $X$ is an independent uniform eight-tuple from $V$.

By Lemma~\ref{lem:sufficient-condition}, it remains to construct
one real symmetric positive definite $5\times5$ matrix $B$, independent of $G$,
and prove
\[
\tr(B\mathsf F)\leq0
\qquad\text{for every finite triangle-free graph }G\text{ with at least one edge}.
\]
Here $\mathsf F$ is the matrix defined from $G$ in \eqref{eq:DF}.
By \eqref{eq:F-source-lift} and the linearity of the trace,
\[
\tr(B\mathsf F)
=\sum_{H\in\mathcal T_8}\Pr(H_X\cong H)\tr(BF_H).
\]
Thus the only dependence on $G$ in this expression is through the probabilities
$\Pr(H_X\cong H)$.
The matrices $F_H$ are the fixed rational matrices in \eqref{eq:FH}.

By Lemma~\ref{lem:exact-flag-square}, for each type $\sigma$, the matrix
\[
\sum_{H\in\mathcal T_8}\Pr(H_X\cong H)C_H^\sigma
\quad\text{is positive semidefinite}.
\]
For each type $\sigma\in\bigcup_{k=0}^6\mathcal T_k$, we will choose a real
symmetric positive semidefinite matrix $M_\sigma$, independent of $G$,
whose rows and columns are indexed by $\mathcal F_\sigma$.
We seek to express $\tr(B\mathsf F)$ in the following form:
\[
\tr(B\mathsf F)
=-\sum_\sigma\tr\left(M_\sigma
\sum_{H\in\mathcal T_8}\Pr(H_X\cong H)C_H^\sigma\right).
\]
All sums over $\sigma$ here and below range over
$\bigcup_{k=0}^6\mathcal T_k$.
The following lemma makes this reduction to the eight-vertex identities precise.
\begin{lemma}\label{lem:finite-trace-reduction}
If there exist a real symmetric positive definite $5\times5$ matrix $B$
and, for every type $\sigma\in\bigcup_{k=0}^6\mathcal T_k$, a real symmetric
positive semidefinite matrix $M_\sigma$ whose rows and columns are indexed
by $\mathcal F_\sigma$, such that
\[
\tr(BF_H)
=-\sum_\sigma\tr(M_\sigma C_H^\sigma)
\qquad\text{for every }H\in\mathcal T_8,
\]
then
\[
\tr(B\mathsf F)\leq0
\]
for every finite simple triangle-free graph $G$ with $|V(G)|\geq1$,
where $\mathsf F$ is defined from $G$ by \eqref{eq:DF}.
Consequently, Theorem~\ref{thm:bound} holds.
\end{lemma}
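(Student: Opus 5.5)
The plan is to combine the source-lift identity \eqref{eq:F-source-lift} with the exact flag-square identity of Lemma~\ref{lem:exact-flag-square}, and then conclude with Lemma~\ref{lem:sufficient-condition}. Fix $G$ triangle-free with $n\geq1$. If $E(G)=\varnothing$, every term in \eqref{eq:DF} carries a factor $A_{xy}=0$. Hence $\mathsf F=0$ and $\tr(B\mathsf F)=0\leq0$, and we may assume $G$ has an edge. Then $G$ is nonempty, so the partition \eqref{eq:source-partition} and Lemmas~\ref{lem:source-lift} and~\ref{lem:exact-flag-square} apply.

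First, by \eqref{eq:F-source-lift} and linearity of the trace,
\[
\tr(B\mathsf F)=\sum_{H\in\mathcal T_8}\Pr(H_X\cong H)\tr(BF_H).
\]
Substituting the hypothesized identity for each $H\in\mathcal T_8$ and exchanging the finite sums gives
\[
\tr(B\mathsf F)=-\sum_\sigma\tr\Bigl(M_\sigma\sum_{H\in\mathcal T_8}\Pr(H_X\cong H)C_H^\sigma\Bigr).
\]
This uses linearity of $C\mapsto\tr(M_\sigma C)$; the index sets of $M_\sigma$ and $C_H^\sigma$ agree since both are indexed by $\mathcal F_\sigma$.

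Second, for each type $\sigma$, \eqref{eq:averaged-count-psd} says that $N_\sigma=\sum_H\Pr(H_X\cong H)C_H^\sigma$ is positive semidefinite. This matrix is also symmetric, because each $C_H^\sigma$ is. Since $M_\sigma$ is symmetric positive semidefinite, Lemma~\ref{lem:trace-positivity} gives $\tr(M_\sigma N_\sigma)\geq0$. Summing over the finitely many types shows $\tr(B\mathsf F)\leq0$. The same fixed $B$ works for every $G$ because neither $B$ nor $M_\sigma$ depends on $G$. Lemma~\ref{lem:sufficient-condition} then yields Theorem~\ref{thm:bound}.

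No step is genuinely hard, since all the analytic content sits in the earlier lemmas. The only point needing care is bookkeeping: Lemma~\ref{lem:exact-flag-square} and \eqref{eq:F-source-lift} must refer to the same random eight-tuple $X$ and the same catalog $\mathcal T_8$, so that the coefficients $\Pr(H_X\cong H)$ match term by term. The edgeless case must also be handled separately, because those lemmas were stated only for nonempty graphs.
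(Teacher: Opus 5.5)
Your proof is correct and follows the paper's argument exactly. You rewrite $\tr(B\mathsf F)$ via \eqref{eq:F-source-lift}, substitute the assumed identities, interchange the finite sums, and conclude with \eqref{eq:averaged-count-psd}, Lemma~\ref{lem:trace-positivity} and Lemma~\ref{lem:sufficient-condition}. Your separate edgeless case is harmless but unnecessary: ``nonempty'' in Lemmas~\ref{lem:source-lift} and~\ref{lem:exact-flag-square} only means $V(B)\neq\varnothing$, which uniform sampling requires, so the paper applies them directly to every $G$ with $|V(G)|\geq1$.
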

\noindent\emph{Proof.}
Fix an arbitrary finite simple triangle-free graph $G$ with $|V(G)|\geq1$,
and let $X$ be an independent uniform eight-tuple from $V(G)$.
Using \eqref{eq:F-source-lift}, the assumed identities, and linearity of the trace,
we obtain
\begin{align*}
\tr(B\mathsf F)
&=\sum_{H\in\mathcal T_8}\Pr(H_X\cong H)\tr(BF_H)\\
&=-\sum_{H\in\mathcal T_8}\Pr(H_X\cong H)
\sum_\sigma\tr(M_\sigma C_H^\sigma)\\
&=-\sum_\sigma\tr\left(
M_\sigma
\sum_{H\in\mathcal T_8}\Pr(H_X\cong H)C_H^\sigma\right)
\leq0.
\end{align*}
The third equality interchanges the finite sums and uses linearity of the trace.
For each $\sigma$, the first matrix factor in the last line is positive
semidefinite by hypothesis, and the second is positive semidefinite by
Lemma~\ref{lem:exact-flag-square}.
Thus their trace product is nonnegative by Lemma~\ref{lem:trace-positivity},
which gives the last inequality.
Since $G$ was arbitrary and $B$ is positive definite,
Lemma~\ref{lem:sufficient-condition} proves Theorem~\ref{thm:bound}.
This finishes the proof.\hfill$\square$

To find the matrices in Lemma~\ref{lem:finite-trace-reduction}, we used
semidefinite programming with the symmetric matrices $B,M_\sigma$ and a
real scalar $\tau$ as unknowns, while $F_H$ and $C_H^\sigma$ were fixed:
\[
\begin{aligned}
\text{minimize}\quad &\tau\\
\text{subject to}\quad
&B_{00}=1,\qquad B-10^{-8}I\succeq0,\qquad \tau\geq0,\\
&M_\sigma\succeq0
\quad\left(\sigma\in\bigcup_{k=0}^6\mathcal T_k\right),\\
&-\tau\leq10^6\left(\tr(BF_H)
+\sum_\sigma\tr(M_\sigma C_H^\sigma)\right)\leq\tau
\quad(H\in\mathcal T_8).
\end{aligned}
\]
Here $S\succeq0$ means that $S$ is positive semidefinite, and the sizes of
$B$ and $M_\sigma$ are those in Lemma~\ref{lem:finite-trace-reduction}.
The normalization $B_{00}=1$ excludes the zero solution, and the lower
bound on $B$ ensures positive definiteness.
The objective minimizes the maximum absolute error in the $410$ identities,
scaled by $10^6$; a feasible solution with $\tau=0$ satisfies those identities exactly.
The constant $7/50$ in $F_H$ is fixed throughout this search.
We then rationalized the numerical matrix factors and corrected the remaining
errors by solving linear equations over the rationals, obtaining the exact
integer data below, whose positivity and identities are checked in
Lemma~\ref{lem:finite-certificate}.

To specify explicitly the matrices obtained from the SDP after rational
correction, we introduce the following integer data.
To construct a positive definite $B$, take a positive integer $q$, an invertible
integer $5\times5$ matrix $L$, and a symmetric integer $5\times5$ matrix $X_*$,
and set
\[
B=L(qI+X_*)L^{\mathsf T}.
\]
We will choose them so that $qI+X_*$ is positive definite; then $B$ is positive definite since $L$ is invertible.

For each $\sigma\in\bigcup_{k=0}^6\mathcal T_k$, to construct a positive
semidefinite $M_\sigma$, take a rectangular integer matrix $V_\sigma$
whose rows are indexed by $P\in\mathcal F_\sigma$, and a
symmetric integer matrix $X_\sigma$ whose size equals the number of columns
of $V_\sigma$.
We will require $qI+X_\sigma$ to be positive definite as well.
Then the square matrix
\[
V_\sigma(qI+X_\sigma)V_\sigma^{\mathsf T}
\]
has rows and columns indexed by the same flags as $C_H^\sigma$, and is positive semidefinite.
For these constructions, Lemma~\ref{lem:integer-dominance} gives a direct
way to check the required positive definiteness: it suffices to verify
\[
q+(X_\tau)_{ii}>\sum_{j\ne i}|(X_\tau)_{ij}|
\quad\text{for every row }i,
\qquad \tau\in\{*\}\cup\bigcup_{k=0}^6\mathcal T_k.
\]
The trace of each such product means the explicit finite sum
\[
\tr\left(V_\sigma(qI+X_\sigma)V_\sigma^{\mathsf T}C_H^\sigma\right)
=\sum_{P,P'\in\mathcal F_\sigma}
[V_\sigma(qI+X_\sigma)V_\sigma^{\mathsf T}]_{PP'}C_H^\sigma(P,P').
\]

For positive integers $a_\sigma$, we choose
\[
M_\sigma=\frac{2^{80}}{8!\,a_\sigma^2}
V_\sigma(qI+X_\sigma)V_\sigma^{\mathsf T}.
\]
This matrix is positive semidefinite because $2^{80}/(8!a_\sigma^2)>0$
and $V_\sigma(qI+X_\sigma)V_\sigma^{\mathsf T}$ is positive semidefinite.
Thus the fixed identities to verify become
\[
\tr(BF_H)
=-\sum_\sigma\frac{2^{80}}{8!\,a_\sigma^2}
\tr\left(V_\sigma(qI+X_\sigma)V_\sigma^{\mathsf T}C_H^\sigma\right)
\qquad(H\in\mathcal T_8).
\]
The coefficients $2^{80}/a_\sigma^2$ are fixed positive rational numbers
in the representation; their numerical values will be specified below.
By Lemma~\ref{lem:finite-trace-reduction}, these identities and the stated
positivity suffice to prove Theorem~\ref{thm:bound}.
It remains to specify $L,q,X_*,X_\sigma,V_\sigma$ explicitly and check the following conditions:
\begin{enumerate}
\renewcommand{\theenumi}{\roman{enumi}}
\renewcommand{\labelenumi}{(\theenumi)}
\item\label{cond:L-invertible} The integer matrix $L$ is invertible.
\item\label{cond:spectral-positive} The matrix $qI+X_*$ is positive definite. By Lemma~\ref{lem:integer-dominance}, it suffices to check
\[
q+(X_*)_{ii}-\sum_{j\ne i}|(X_*)_{ij}|>0
\qquad(i=0,\ldots,4).
\]
\item\label{cond:flag-positive} For every type $\sigma$, the matrix $qI+X_\sigma$ is positive definite. By Lemma~\ref{lem:integer-dominance}, it suffices to check
\[
q+(X_\sigma)_{ii}-\sum_{j\ne i}|(X_\sigma)_{ij}|>0
\qquad\text{in every row }i.
\]
\item\label{cond:finite-identity} For a fixed positive integer $\Delta$, the following identity holds for every $H\in\mathcal T_8$:
\begin{align*}
0&=\frac{\Delta}{2^{80}}\,
\tr\left(L(qI+X_*)L^{\mathsf T}F_H\right)\\
&\quad+\sum_\sigma\frac{\Delta}{40320a_\sigma^2}
\tr\left(V_\sigma(qI+X_\sigma)V_\sigma^{\mathsf T}C_H^\sigma\right).
\end{align*}
\end{enumerate}

We now briefly introduce how we compute the required coefficients and matrices,
specify the fixed integer data, and verify these conditions.\footnote{\raggedright The programs, data and manuscript are in the GitHub repository
\href{https://github.com/Diudiutse/triangle-free-signless-laplacian}{\path{Diudiutse/triangle-free-signless-laplacian}}.
All file paths below are relative to its root.}

\paragraph{Step 1. Computing $c_{ij,J}$.}
The program \href{https://github.com/Diudiutse/triangle-free-signless-laplacian/blob/main/programs/01_expand_F.py}{\path{programs/01_expand_F.py}} implements the five expectation formulas
for $U_0(z),\ldots,U_4(z)$ displayed above and substitutes them into
$\mathsf F_{ij}$.
Each term is stored as its rational coefficient and a list of adjacency
factors, with a distinct name for each distinct summation position.
In the complete expansion of $\mathsf F_{ij}$, every such position is
averaged independently and uniformly; this convention is not stored as
additional data.
Addition, scalar multiplication and multiplication first expand the
entire expression by the distributive law.
The program then removes terms containing a loop or triangle, removes
repeated adjacency factors, and adds the coefficients of isomorphic graphs.
The resulting coefficients are precisely those in \eqref{eq:hom-expansion}.
The output \href{https://github.com/Diudiutse/triangle-free-signless-laplacian/blob/main/programs/results/01/c_ij_J.pkl}{\path{programs/results/01/c_ij_J.pkl}} is a dictionary with keys $(i,j)$,
$0\leq i,j\leq4$; its value at $(i,j)$ is the dictionary
$\{J\mapsto c_{ij,J}\}$.
A graph is stored as a pair \texttt{(vertices, edges)} of tuples:
the first tuple lists its vertices, and the second lists its edges as pairs
$(a,b)$ with $a<b$, representing the unordered edges $\{a,b\}$.
The stored vertex set is $[s]$ for a graph of order $s$.
All coefficients are exact rational numbers, stored using
\texttt{Fraction}.

\paragraph{Step 2. Generating $\mathcal T_8$.}
The program \href{https://github.com/Diudiutse/triangle-free-signless-laplacian/blob/main/programs/02_generate_tau8.py}{\path{programs/02_generate_tau8.py}} starts with the empty graph on no
vertices and adds one vertex at a time.
For every retained graph on $[s]$, it considers every independent set
$S\subseteq[s]$ and adds vertex $s+1$ with neighborhood exactly $S$.
It then retains one representative of each ordinary isomorphism class,
using NetworkX for the isomorphism test.
The new graph is triangle-free because any new triangle would contain
$s+1$ and two adjacent vertices in $S$.
Conversely, deleting a vertex from any triangle-free graph leaves a
triangle-free graph in which that vertex's former neighborhood is independent.
Thus induction on the number of vertices proves that the enumeration is complete.
The numbers of representatives at orders $0,\ldots,8$ are
\[
1,1,2,3,7,14,38,107,410.
\]
The output \href{https://github.com/Diudiutse/triangle-free-signless-laplacian/blob/main/programs/results/02/tau_8.pkl}{\path{programs/results/02/tau_8.pkl}} is the list of the $410$ representatives
$H\in\mathcal T_8$, in the generated order.

\paragraph{Step 3. Computing $F_H$.}
The program \href{https://github.com/Diudiutse/triangle-free-signless-laplacian/blob/main/programs/03_compute_F_H.py}{\path{programs/03_compute_F_H.py}} loads the coefficient dictionary from
\href{https://github.com/Diudiutse/triangle-free-signless-laplacian/blob/main/programs/results/01/c_ij_J.pkl}{\path{programs/results/01/c_ij_J.pkl}} and the graph list from
\href{https://github.com/Diudiutse/triangle-free-signless-laplacian/blob/main/programs/results/02/tau_8.pkl}{\path{programs/results/02/tau_8.pkl}}.
Its function \texttt{tinj(J,H)} enumerates all injections
$\phi:V(J)\hookrightarrow V(H)$, counts those preserving every edge of $J$,
and divides by the total number of injections.
It therefore returns $t_{\mathrm{inj}}(J,H)$ as an exact rational number.
For each $H$ and each $(i,j)$, the program evaluates exactly the sum
in \eqref{eq:FH}:
\[
(F_H)_{ij}=\sum_Jc_{ij,J}t_{\mathrm{inj}}(J,H).
\]
The output \href{https://github.com/Diudiutse/triangle-free-signless-laplacian/blob/main/programs/results/03/F_H.pkl}{\path{programs/results/03/F_H.pkl}} is a dictionary whose key is $H$ and
whose value is the dictionary $\{(i,j)\mapsto(F_H)_{ij}\}$.
It contains $410$ matrices, each with $25$ rational entries.

\paragraph{Step 4. Generating the types $\sigma$.}
The program \href{https://github.com/Diudiutse/triangle-free-signless-laplacian/blob/main/programs/04_generate_sigma.py}{\path{programs/04_generate_sigma.py}} calls the graph-generation
function from \href{https://github.com/Diudiutse/triangle-free-signless-laplacian/blob/main/programs/02_generate_tau8.py}{\path{programs/02_generate_tau8.py}} for each order $k=0,\ldots,6$.
It lists the resulting representatives in increasing order of $k$ and
saves them in \href{https://github.com/Diudiutse/triangle-free-signless-laplacian/blob/main/programs/results/04/sigma_list.pkl}{\path{programs/results/04/sigma_list.pkl}}.
There are $1+1+2+3+7+14+38=66$ types.
For a retained representative on $[k]$, these vertex numbers become the
fixed root labels in the subsequent flag computation.

\paragraph{Step 5. Generating $\mathcal F_\sigma$.}
The program \href{https://github.com/Diudiutse/triangle-free-signless-laplacian/blob/main/programs/05_generate_flags.py}{\path{programs/05_generate_flags.py}} reads the type list from step~4.
For each $\sigma$ of order $k$, it starts with $\sigma$ and successively
adds vertices $k+1,\ldots,\ell$, where $\ell=\lfloor(8+k)/2\rfloor$,
again considering every independent neighborhood.
At each order, it identifies two graphs only when an isomorphism fixes
$1,\ldots,k$ individually.
The function \texttt{labeled\_graphs\_are\_isomorphic} tests this condition
by enumerating all permutations of the remaining vertices and checking
whether one of them preserves every adjacency and nonadjacency.
Deleting an unlabeled vertex gives the same induction as in step~2,
now with all root labels fixed, so every $\sigma$-flag of order $\ell$ is represented.
The output \href{https://github.com/Diudiutse/triangle-free-signless-laplacian/blob/main/programs/results/05/F_sigma.pkl}{\path{programs/results/05/F_sigma.pkl}} is the dictionary
$\{\sigma\mapsto\text{the ordered list of flags in }\mathcal F_\sigma\}$.
This list fixes the row and column order of $C_H^\sigma$ and the row
order of $V_\sigma$.

\paragraph{Step 6. Computing $C_H^\sigma$.}
The program \href{https://github.com/Diudiutse/triangle-free-signless-laplacian/blob/main/programs/06_compute_C.py}{\path{programs/06_compute_C.py}} reads the graph list from step~2 and
the flag lists from step~5.
For each $(\sigma,H)$, the function \texttt{compute\_C\_matrix} computes the
entire matrix in \eqref{eq:flag-count}.
It enumerates all ordered choices of $k$ distinct roots in $H$ inducing
exactly the labeled type $\sigma$.
For each such root map $\theta$, it enumerates pairs of subsets satisfying
\[
S,T\subseteq V(H)\setminus\theta([k]),\qquad
|S|=|T|=\ell-k,\qquad S\cap T=\varnothing.
\]
It finds the flag representatives $P,P'$ for which
\[
(H[\theta([k])\cup S],\theta)\cong_\sigma P,
\qquad
(H[\theta([k])\cup T],\theta)\cong_\sigma P',
\]
and adds
\[
((\ell-k)!)^2(8-2\ell+k)!
\]
to the entry indexed by $(P,P')$.
The two factors $(\ell-k)!$ count the orderings within $S$ and $T$,
and the last factorial counts the orderings of the unused vertices.
The root order has already been fixed, so this counts every bijection in
\eqref{eq:flag-count} exactly once.
The output \href{https://github.com/Diudiutse/triangle-free-signless-laplacian/blob/main/programs/results/06/C.pkl}{\path{programs/results/06/C.pkl}} is the dictionary
$\{(\sigma,H)\mapsto C_H^\sigma\}$, containing $66\cdot410=27060$ complete
integer matrices in the flag orders from step~5.

\paragraph{Step 7. Specifying the fixed integers.}
After these six computations, the remaining inputs are the fixed integers
$L,q,\Delta,X_*$ and $V_\sigma,X_\sigma,a_\sigma$.
The file \href{https://github.com/Diudiutse/triangle-free-signless-laplacian/blob/main/programs/spectral_data.py}{\path{programs/spectral_data.py}} supplies $L,q,\Delta,X_*$, with
$q,\Delta>0$, and
\[
\resizebox{\textwidth}{!}{$
L=\begin{pmatrix}
1099511627776&0&0&0&0\\
-232335715041&58226488264&0&0&0\\
646804735047&36701051459&62858792171&0&0\\
-3529332685417&-399541705998&9914953481&414703530043&0\\
-47136593763&-33884256159&53733927090&-22938839431&195039391412
\end{pmatrix}.
$}
\]
It is lower triangular with positive diagonal entries, so
\[
\det L=
1099511627776\cdot58226488264\cdot62858792171
\cdot414703530043\cdot195039391412>0.
\]
This verifies condition~(\ref{cond:L-invertible}).

The file \href{https://github.com/Diudiutse/triangle-free-signless-laplacian/blob/main/programs/sigma_data.py}{\path{programs/sigma_data.py}} supplies a dictionary \texttt{sigma\_data}.
Its key is a type $\sigma$ from step~4; the value is a dictionary with
exactly three entries:
\texttt{V\_sigma} stores $V_\sigma$, \texttt{X\_sigma} stores $X_\sigma$,
and \texttt{a\_sigma} stores the positive integer $a_\sigma$.
The rows of $V_\sigma$ follow the flag list from step~5, and the indices
of $X_\sigma$ follow the columns of $V_\sigma$.
The symmetric matrices $X_*$ and $X_\sigma$ are specified in these files
by upper-triangular triples $(i,j,c)$, with $i\leq j$.
For $\tau=*$ or a type $\sigma$, these triples determine the full matrix by
\begin{equation}\label{eq:correction-entries}
(X_\tau)_{ij}=(X_\tau)_{ji}=
\begin{cases}
c,&\text{if the triple }(i,j,c)\text{ is listed for }X_\tau,\\
0,&\text{if no triple is listed for }X_\tau\text{ at }(i,j),
\end{cases}
\qquad(i\leq j).
\end{equation}
Matrix indices in the programs start at zero.
Importing the two files produces the complete integer matrices with these entries.

\paragraph{Step 8. Verifying the remaining conditions.}
The last two programs check conditions~(\ref{cond:spectral-positive}),
(\ref{cond:flag-positive}) and~(\ref{cond:finite-identity}) using these inputs.
Their exact conclusions are as follows.
\begin{lemma}\label{lem:finite-certificate}
For the fixed integer data just specified, every
$\tau\in\{*\}\cup\bigcup_{k=0}^6\mathcal T_k$ satisfies
\begin{equation}\label{eq:certificate-row-margin}
q+(X_\tau)_{ii}-\sum_{j\ne i}|(X_\tau)_{ij}|>0
\qquad\text{in every row }i,
\end{equation}
and every $H\in\mathcal T_8$ satisfies
\begin{align}
0&=\frac{\Delta}{2^{80}}\,
\tr\left(L(qI+X_*)L^{\mathsf T}F_H\right)\notag\\
&\quad+\sum_\sigma\frac{\Delta}{40320a_\sigma^2}
\tr\left(V_\sigma(qI+X_\sigma)V_\sigma^{\mathsf T}C_H^\sigma\right).
\label{eq:integer-certificate}
\end{align}
\end{lemma}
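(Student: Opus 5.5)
This lemma is a finite exact computation, so the plan is to describe how the verification is organized and why each step is exact. There is no conceptual argument beyond careful bookkeeping.

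\emph{Row margins.} For the margin condition \eqref{eq:certificate-row-margin}, I load $q$, $X_*$ and each $X_\sigma$ from the data files and rebuild the full symmetric integer matrices from the upper-triangular triples via \eqref{eq:correction-entries}. For every row I then compute $q+(X_\tau)_{ii}-\sum_{j\ne i}|(X_\tau)_{ij}|$ in exact integer arithmetic and check that it is strictly positive. This covers $X_*$ and all $66$ types, and it is a direct finite enumeration with no rounding.

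\emph{The $410$ identities.} For \eqref{eq:integer-certificate}, I fix $H\in\mathcal T_8$ and compute two kinds of terms.
\begin{itemize}
\item The spectral term: form the integer matrix $L(qI+X_*)L^{\mathsf T}$ and take its entrywise product sum $\sum_{i,j}[\,\cdot\,]_{ij}(F_H)_{ij}$ with the rational matrix $F_H$ from Step~3. Both matrices are symmetric, so this equals the trace. Multiply by $\Delta/2^{80}$.
\item The flag terms: for each $\sigma$, form $V_\sigma(qI+X_\sigma)V_\sigma^{\mathsf T}$, which is indexed by $\mathcal F_\sigma$ in the Step~5 order. Contract it with the integer matrix $C_H^\sigma$ from Step~6, and multiply by $\Delta/(40320a_\sigma^2)$.
\end{itemize}
All of this is done with Python integers and \texttt{Fraction}, so the sum is an exact rational number. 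The claim for this $H$ is that it equals $0$ exactly, and I repeat the check for all $410$ graphs.

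\emph{Correctness of the inputs.} The identity is only meaningful if $F_H$ and $C_H^\sigma$ are the objects defined in \eqref{eq:FH} and \eqref{eq:flag-count}. I would rely on the justifications already given in Steps~1--6.
\begin{itemize}
\item The expansion of $U_0,\dots,U_4$ is into homomorphism densities, with loops and triangles removed and isomorphic graphs merged.
\item $t_{\mathrm{inj}}$ is computed by full enumeration of injections.
\item The catalogs $\mathcal T_8$ and $\mathcal F_\sigma$ are complete, by the independent-neighbourhood extension induction.
\item Each $C$ entry is weighted by the factor $((\ell-k)!)^2(8-2\ell+k)!$, as in Lemma~\ref{lem:ordering-normalization}.
\end{itemize}
The consistency of the indexing conventions also needs checking: the row order of $V_\sigma$ must match the flag order in the catalog, and the indices of $X_\sigma$ must match the columns of $V_\sigma$.

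\emph{Main obstacle.} I expect the difficulty to lie in making the identity hold exactly, not in checking it. The SDP solution only satisfies the $410$ identities approximately. The fix is to round $B$ and the $M_\sigma$ to the integer form, keeping a positive-definite margin, and then solve a rational linear system for small correction entries in $X_*$ and the $X_\sigma$ that cancel the residuals. The corrected matrices must still satisfy diagonal dominance, so the correction must be small compared with the margin $q$. If the residual system is underdetermined, I would first solve for the minimum-norm correction over the rationals, and then check that the margins survive.
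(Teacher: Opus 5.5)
Your proposal matches the paper's proof: the lemma is settled by exact finite verification, checking the diagonal-dominance margin in every row of $X_*$ and the $66$ matrices $X_\sigma$ in integer arithmetic, and then evaluating the left side of \eqref{eq:integer-certificate} exactly for all $410$ graphs $H$. The paper evaluates each flag term as $\tr\bigl((qI+X_\sigma)V_\sigma^{\mathsf T}C_H^\sigma V_\sigma\bigr)$ by cyclicity rather than forming $V_\sigma(qI+X_\sigma)V_\sigma^{\mathsf T}$ first, which only affects efficiency. Your closing paragraph on rounding and rational correction describes how the data were found, which the paper also mentions, but it plays no role in the verification itself.
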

\noindent\emph{Proof by exact finite verification.}
First, \href{https://github.com/Diudiutse/triangle-free-signless-laplacian/blob/main/programs/verify_39.py}{\path{programs/verify_39.py}} imports $q,X_*$ from \href{https://github.com/Diudiutse/triangle-free-signless-laplacian/blob/main/programs/spectral_data.py}{\path{programs/spectral_data.py}}
and the dictionary \texttt{sigma\_data} from \href{https://github.com/Diudiutse/triangle-free-signless-laplacian/blob/main/programs/sigma_data.py}{\path{programs/sigma_data.py}}.
Its function \texttt{satisfies\_39(X)} checks
\[
q+X_{ii}>\sum_{j\ne i}|X_{ij}|
\]
in every row of the input matrix $X$.
The program applies this function first to $X_*$, and then to each
$X_\sigma$ in the dictionary.
It stops and reports the corresponding matrix if any inequality fails.
All $1650$ rows of the $67$ matrices pass this check, proving
\eqref{eq:certificate-row-margin}.
Lemma~\ref{lem:integer-dominance} then verifies
conditions~(\ref{cond:spectral-positive}) and~(\ref{cond:flag-positive}).

Next, \href{https://github.com/Diudiutse/triangle-free-signless-laplacian/blob/main/programs/verify_40.py}{\path{programs/verify_40.py}} imports $L,q,\Delta,X_*$ from
\href{https://github.com/Diudiutse/triangle-free-signless-laplacian/blob/main/programs/spectral_data.py}{\path{programs/spectral_data.py}} and all $V_\sigma,X_\sigma,a_\sigma$ from
\href{https://github.com/Diudiutse/triangle-free-signless-laplacian/blob/main/programs/sigma_data.py}{\path{programs/sigma_data.py}}.
It reads the four outputs
\[
\begin{gathered}
\text{\href{https://github.com/Diudiutse/triangle-free-signless-laplacian/blob/main/programs/results/02/tau_8.pkl}{\path{programs/results/02/tau_8.pkl}}},\qquad
\text{\href{https://github.com/Diudiutse/triangle-free-signless-laplacian/blob/main/programs/results/03/F_H.pkl}{\path{programs/results/03/F_H.pkl}}},\\
\text{\href{https://github.com/Diudiutse/triangle-free-signless-laplacian/blob/main/programs/results/05/F_sigma.pkl}{\path{programs/results/05/F_sigma.pkl}}},\qquad
\text{\href{https://github.com/Diudiutse/triangle-free-signless-laplacian/blob/main/programs/results/06/C.pkl}{\path{programs/results/06/C.pkl}}}.
\end{gathered}
\]
Before checking the identity, it checks that the dictionary keys cover
exactly the required graphs and types, that all $C_H^\sigma$ have the sizes
determined by the flag lists, that the row counts of $V_\sigma$ agree
with those lists, and that $a_\sigma>0$ and
$\Delta/(40320a_\sigma^2)\in\mathbb Z$ for every type.

It forms $B=L(qI+X_*)L^{\mathsf T}$ once, and then loops over every
$H$ in the graph list from step~2.
For that $H$, it reads $F_H$ and the matching matrices $C_H^\sigma$
and evaluates the entire left-hand side of \eqref{eq:integer-certificate}.
The first trace is computed as $\tr(BF_H)$.
For each term in the sum over $\sigma$, the program uses
\begin{align}
\tr\left(V_\sigma(qI+X_\sigma)V_\sigma^{\mathsf T}C_H^\sigma\right)
&=\tr\left((qI+X_\sigma)(V_\sigma^{\mathsf T}C_H^\sigma V_\sigma)\right)
\notag\\
&=\sum_{i,j}(qI+X_\sigma)_{ji}
(V_\sigma^{\mathsf T}C_H^\sigma V_\sigma)_{ij}.
\label{eq:flag-trace-evaluation}
\end{align}
The first equality uses cyclicity of the trace, and the second is its
entrywise definition.
The code implements the last sum by transposing the first matrix,
multiplying corresponding entries of the two matrices, and summing all entries.
All matrix operations use NumPy arrays with \texttt{dtype=object}, so
the entries remain Python integers or exact rational \texttt{Fraction} values.
The program stops if the value for any $H$ is nonzero.
All $410$ values are exactly zero, proving \eqref{eq:integer-certificate}
and condition~(\ref{cond:finite-identity}).
The output \href{https://github.com/Diudiutse/triangle-free-signless-laplacian/blob/main/programs/results/verify_40.json}{\path{programs/results/verify_40.json}} records these $410$ zero values.
This finishes the verification of Lemma~\ref{lem:finite-certificate}.\hfill$\square$

\noindent\emph{Proof of Theorem~\ref{thm:bound}:}
The result follows from Lemmas~\ref{lem:sufficient-condition},
\ref{lem:finite-trace-reduction} and~\ref{lem:finite-certificate}.\hfill$\square$


\begin{samepage}
\section*{Statement on AI use}
The authors used GPT-6.1 Sol in the preparation of this work,
including the development of proof arguments, the checking of
intermediate steps, the generation and execution of semidefinite
programming (SDP) code and code for verifying inequalities and equalities,
and the drafting and revision of the manuscript and its
LaTeX source. All mathematical results, arguments, and proofs were
independently reviewed and verified by the authors, who take full
responsibility for the accuracy and content of this work.\par
\end{samepage}

\end{document}